\DeclareMathOperator{\spn}{span}
\newtheorem{thm}{Theorem}[section]
\newtheorem{lem}{Lemma}[section]
\newtheorem{prop}{Proposition}[section]
\theoremstyle{definition}
\theoremstyle{remark}
\newtheorem{rem}{Remark}[section]
\numberwithin{equation}{section}
\newcommand{\bu}{\mathbf{u}}
\newcommand{\bw}{\mathbf{w}}
\newcommand{\bvarphi}{\bm{\varphi}}
\newcommand{\bnu}{\bm{\nu}}
\newcommand{\bGa}{\mathbf{\Gamma}}
\newcommand{\bx}{\mathbf{x}}
\newcommand{\by}{\mathbf{y}}
\newcommand{\rmi}{\mathrm{i}}
\newcommand{\bS}{\mathbf{S}}
\newcommand{\bK}{\mathbf{K}}
\newcommand{\bI}{\mathbf{I}}
\newcommand{\Acal}{\mathcal{A}}
\newcommand{\Lcal}{\mathcal{L}}
\newcommand{\beq}{\begin{equation}}
\newcommand{\eeq}{\end{equation}}
\newcommand{\mm}[1]{{\color{black}{#1}}}
\newcommand{\rr}[1]{{\color{black}{#1}}}
\title[Minnaert resonances for bubbles in soft elastic materials]{Minnaert resonances for bubbles \\ in soft elastic materials}
\author{Hongjie Li}
\address{Department of Mathematics, The Chinese University of Hong Kong, Shatin, Hong Kong SAR, China.}
\email{hjli@math.cuhk.edu.hk}
\author{Hongyu Liu}
\address{Department of Mathematics, City University of Hong Kong, Kowloon, Hong Kong SAR, China.}
\email{hongyliu@cityu.edu.hk; hongyu.liuip@gmail.com}
\author{Jun Zou}
\address{Department of Mathematics, The Chinese University of Hong Kong, Shatin, Hong Kong SAR, China.
}
\email{zou@math.cuhk.edu.hk}
\begin{document}

\maketitle

\begin{abstract}

Minnaert resonance is a widely known acoustic phenomenon and it has many important applications, in particular in the effective realisation of acoustic metamaterials using bubbly media in recent years. In this paper, motivated by the Minnaert resonance in acoustics, we consider the low-frequency resonance for acoustic bubbles embedded in soft elastic materials. This is a hybrid physical process that couples the acoustic and elastic wave propagations. By delicately and subtly balancing the acoustic and elastic parameters as well as the geometry of the bubble, we show that Minnaert resonance can occur (at least approximately) for rather general constructions. Our study poses a great potential for the effective realisation of negative elastic materials by using bubbly elastic media.  

\medskip

\medskip

\noindent{\bf Keywords:}~~Minnaert resonance, bubbly elastic medium, hybrid Neumann-Poincar\'e operator, spectral, negative elastic materials. 

\noindent{\bf 2010 Mathematics Subject Classification:}~~35R30, 35B30, 35Q60, 47G40

\end{abstract}

\section{Introduction}

The oscillation of bubbles in media is a classical problem. In particular, when bubbles are immersed in liquids, even a very small volume fraction of bubbles can have a significant influence on the effective velocity of waves in liquids \cite{CP, HT}. This is due to the high oscillation of the bubbles caused by the high contrast in density between the bubbles and the surrounding liquid \cite{AF1, GP}. In fact, at a particular low frequency known as the \emph{Minnaert resonant frequency}, the bubbles can be treated as acoustic resonators \cite{M}. The exceptional acoustic properties mentioned above can have many important applications and in particular can be used to design new materials, such as phononic crystals.
In addition to many experimental progresses, the bubbly acoustic materials have been systematically and comprehensively investigated recently in the mathematical literature. Furthermore, based on the mathematical theory developed, novel applications were also proposed, especially for the effective realisation of acoustic metamaterials. For the case that a single bubble is immersed in liquids, the authors in \cite{AF1} provided a rigorous treatment of the Minnaert resonance and the monopole approximation. Later, they investigated the acoustic scattering by a large number of bubbles in liquids at frequencies near the Minnaert resonant frequency in \cite{AF4}. Thus by designing bubble metascreens, the superabsorption effect can be achieved \cite{AF2}. Around the Minnaert resonant frequency, an effective medium theory was derived in \cite{AZ}. Moreover, the opening of a sub-wavelength phononic bandgap was demonstrated by considering a periodic arrangement of bubbles and exploiting the corresponding Minnaert resonance in \cite{AF3}.

Nevertheless, as pointed out in \cite{LB, T}, the practical constructions of acoustic bublly designs are very challenging. The major difficulty arises from making bubbles to have a uniform size and letting them remain inside the liquids. In order to overcome those challenges, substituting the host medium from liquids to soft elastic materials (the shear modulus is small) becomes a more practical scheme. In fact, the oscillation of a spherical cavity in an elastic material was investigated many years ago \cite{MP}. When a spherical bubble is immersed in a soft elastic material, it was shown in \cite{CTL} that there also exists a certain low-frequency resonance. Using such resonant properties, the bubbly elastic structures have been used to the experimental design of new materials for strikingly new applications. For examples, bubble phononic crystals were designed in \cite{LB}, superabsorption of acoustic waves with bubble metascreens was achieved in \cite{LS} and reducing underwater sound transmission was shown in \cite{CTLC} by microfabricating cavities into silicone rubber (a soft elastic material).

Motivated by the aforementioned physical and mathematical studies, we consider in this paper the low-frequency resonance for the case where a bubble is embedded in a soft elastic medium. We aim to derive a systematic and comprehensive mathematical understanding of the resonance phenomena caused by the acoustic and elastic interactions. 
It turns out that the mathematical investigation on the resonance associated with the elastic bubbly media is more challenging than that for the acoustic bubbly media. Indeed, we note that, first, the wave scattering from an elastic bubbly medium is a hybrid physical process which couples the acoustic wave propagation inside the bubble and the elastic wave propagation outside the bubble. Second, since the shear modulus in the elastic material is non-zero, thus the resonance heavily depends on the geometry of the bubble \cite{CTL}. This property is in a sharp contrast to the case of bubbles in liquids which features weak shape dependence \cite{AF1}. Therefore one can not expect an explicit expression of the resonant frequencies (unless the geometry of the bubble is simple, say, a radial one) as the case for bubbles in liquids that was derived in \cite{AF1}. Third, the bubble-liquid resonance only depends on the high contrast of the density between the bubble and the liquid. However, for the bubble-elastic material resonance, in addition to the high contrast of the density, the high contrast of the shear modulus and the compression modulus is required; see also \cite{SM} for a related discussion.

According to our discussion above, it is clear that the bubble-elastic resonance is of a different physical nature to the bubble-liquid resonance. Nevertheless, in order to reveal its origin of motivation as well as for the terminological convenience, we still call it the Minnaert resonance in the present paper. In order to derive the resonance results, following the spirit of the mathematical treatment in \cite{AF1}, we rely on the layer-potential techniques, which boil down our study to the asymptotic and spectral analysis of the layer-potential operators involved for the coupled PDE systems. By delicately and subtly balancing the acoustic and elastic parameters as well as the geometry of the bubble, we show that Minnaert resonance can (at least approximately) occur for a rather general construction in the three-dimensional case. In the two-dimensional case, due to the technical constraint, we can only deal with the case that the bubble is in the radial geometry. Moreover, as also mentioned earlier, we only consider the case with a single bubble embedded in a soft elastic material. We shall study the other case, e.g., the scattering from multiple bubbles, in our forthcoming work. It is emphasized that similar to the bubble-liquid case, our study poses a great potential for the effective realisation of negative elastic materials by using bubbly elastic media, which we shall also investigate in our near-future study. 

The rest of the paper is organized as follows. In Section 2, we present the general mathematical formulation of our study, especially the acoustic-elastic wave scattering from an bubble-elastic structure and its integral reformulation. In Section 3, we discuss about the general requirements on the medium configuration and also derive some auxiliary results for the subsequent use. Sections 4 and 5 are respectively devoted to the Minnaert resonances in three and two dimensions. Our study is concluded in Section 6 with some related remarks. 

\section{Mathematical setup}

 In this section, we present the general mathematical formulation of our study. 
{Consider an air bubble $D$ in our study, and $D$ is assumed to be a bounded domain in $\mathbb{R}^N$ ($N=2,3$)}, 
with a $C^2$-regular boundary $\partial D$.  Let $\rho_b\in\mathbb{R}_+$ and $\kappa\in\mathbb{R}_+$ signify the density and the bulk modulus of the air inside the bubble, respectively. Assume that the background $\mathbb{R}^N\backslash\overline{D}$ is occupied by a regular and isotropic elastic material parameterized by the Lam\'e constants $(\tilde{\lambda},\tilde{\mu})$ satisfying the following strong convexity conditions 
 \begin{equation}\label{eq:con}
  \mathrm{i)}~~\tilde{\mu}>0\qquad\mbox{and}\qquad \mathrm{ii)}~~N\tilde{\lambda}+2\tilde{\mu}>0.
 \end{equation}
The density of the background material is set to be $\rho_e\in\mathbb{R}_+$. Let $\bu^i$ be an incident elastic wave, which is an entire solution to  $\mathcal{L}_{\tilde{\lambda}, \tilde{\mu}}\bu + \omega^2\rho_e\bu =0$ in $\mathbb{R}^N$. Here, $\omega\in\mathbb{R}_+$ denotes the frequency of the elastic wave. The acoustic-elastic wave interaction is described by the following coupled PDE system (cf. \cite{OS}):
\begin{equation}\label{eq:mo}
  \left\{
    \begin{array}{ll}
      \mathcal{L}_{\tilde{\lambda}, \tilde{\mu}}\bu(\bx) + \omega^2\rho_e\bu(\bx) =0 & \bx\in \mathbb{R}^N\backslash \overline{D},\medskip\\
        \triangle u(\bx) +\tilde{k}^2 u(\bx) =0    & \bx\in D,\medskip \\
   \mathbf{u}(\bx)\cdot\bnu - \frac{1}{ \rho_b \omega^2} \nabla u(\bx)\cdot \bnu=0      & \bx\in\partial D,\medskip \\
      \partial_{\tilde{\bnu}}\mathbf{u}(\bx) +  u(\bx)\bnu=0 & \bx\in\partial D,\medskip\\
     \bu(\bx)-\bu^i(\bx)  \qquad \quad \mbox{satisfies the radiation condition},
    \end{array}
  \right.
\end{equation} 
where $\bu$ is the total elastic wave field outside the domain $D$, $u$ is the pressure inside the domain $D$, $\omega\in\mathbb{R}_+$ is the angular frequency and $\tilde{k}=\omega/c_b$ with $c_b=\sqrt{\kappa/\rho_b}$ signifying the velocity of the wave in $D$. In \eqref{eq:mo}, the Lam\'e operator $ \mathcal{L}_{\tilde{\lambda}, \tilde{\mu}}$ and the co-normal derivative $\partial_{\tilde{\bnu}}$, associated with the parameters $(\tilde{\lambda}, \tilde{\mu})$, are respectively defined by 
\begin{equation}\label{op:lame}
 \Lcal_{\tilde{\lambda}, \tilde{\mu}}\bw:=\tilde{\mu} \triangle\bw + (\tilde{\lambda}+ \tilde{\mu})\nabla\nabla\cdot\bw,
\end{equation}
and
\begin{equation}\label{eq:trac}
\partial_{\tilde{\bnu}}\bw=\tilde{\lambda}(\nabla\cdot \bw)\bnu + 2\tilde{\mu}(\nabla^s\bw) \bnu.
\end{equation}
Here $\bnu$ represents the outward unit normal vector to $\partial D$ and the operator $\nabla^s$ is the symmetric gradient
 \begin{equation}\label{eq:sg1}
 \nabla^s\mathbf{w}:=\frac{1}{2}\left(\nabla\mathbf{w}+\nabla\mathbf{w}^t \right),
 \end{equation}
 with $\nabla\bw$ denoting the matrix $(\partial_j w_i)_{i,j=1}^N$ and the superscript $t$ signifying the matrix transpose. In \eqref{eq:mo}, the third condition denotes the continuity of the normal component of the displacement on the boundary $\partial D$ and the fourth condition is the continuity of the stress across $\partial D$. Moreover, the radiation condition in \eqref{eq:mo} designates the following condition as $|\mathbf{x}|\rightarrow+\infty$ (cf.\cite{LLL1}),
\begin{equation}\label{eq:radi}
\begin{split}
(\nabla\times\nabla\times ( \bu-\bu^i))(\bx)\times\frac{\bx}{|\bx|}-\mathrm{i}\tilde{k}_s\nabla\times ( \bu-\bu^i)(\bx)=&\mathcal{O}(|\bx|^{-2}),\\
\frac{\bx}{|\bx|}\cdot      \left( \nabla(\nabla\cdot ( \bu-\bu^i))  \right)        (\bx)-\mathrm{i}\tilde{k}_p\nabla ( \bu-\bu^i)(\bx)=&\mathcal{O}(|\bx|^{-2}),
\end{split}
\end{equation}
where $\rmi=\sqrt{-1}$,
\begin{equation}\label{pa:ksp}
 \tilde{k}_s=\frac{\omega}{\tilde{c}_s}=\frac{\omega}{\sqrt{\tilde{\mu}/\rho_e}} \quad \mbox{and} \quad \tilde{k}_p=\frac{\omega}{\tilde{c}_p}=\frac{\omega}{\sqrt{(\tilde{\lambda}+2\tilde{\mu})/\rho_e}},
\end{equation}
with $\tilde{\lambda}$ and $\tilde{\mu}$ defined in \eqref{eq:con}.

Next we apply the potential theory to derive the integral representation of the solution to the system \eqref{eq:mo} and give the definition of the resonance. First, we introduce the potential operators for the Helmholtz system and Lam\'e system. Let $G^k(\bx)$ be the fundamental solution of the operator $\triangle+k^2$, namely
\begin{equation}\label{eq:fu_he}
 G^{k}(\bx)=
\left\{
   \begin{array}{ll}
    \displaystyle{ -\frac{\rmi}{4} H_0^{(1)}(k|\bx|)}, & N=2 ,\medskip \\
     \displaystyle{-\frac{e^{\rmi k|\bx|}}{4\pi |\bx|}}, & N=3 ,
   \end{array}
 \right.
\end{equation}
where $H_0^{(1)}$ is the zeroth-order Hankel function of the first kind. The single layer potential associated with the Helmholtz system is defined {for $\varphi(\bx)\in L^2(\partial D)$ by}
\begin{equation}\label{eq:s_h}
 S_{\partial D}^{k}[\varphi](\bx)=\int_{\partial D}G^k(\bx-\by)\varphi(\by)ds(\by) \quad \bx\in\mathbb{R}^N\,.
\end{equation}
Then the conormal derivative of the single layer potential enjoys the jump formula
\begin{equation}\label{eq:ju_he}
\nabla S_{\partial D}^{k}[\varphi] \cdot \bnu|_{\pm}(\bx)=\left(\pm\frac{1}{2}I +K_{\partial D}^{k,*} \right)[\varphi](\bx) \quad \bx\in\partial D,
\end{equation}
where
\[
 K_{\partial D}^{k,*}[\varphi](\bx)= \mbox{p.v.}\int_{\partial D} \nabla_{\bx} G^k(\bx-\by)\cdot \bnu_{\bx}\varphi(\by)ds(\by) \quad \bx\in\partial D,
\]
which is also known as the Neumann-Poincar\'e operator associated with Helmholtz system. Here and also in what follows, $\mbox{p.v.}$ {stands} for the Cauchy principal value. Moreover, {we introduce the following $L^2$-adjoint of the operator $K_{\partial D}^{k,*}$:}
\[
K_{\partial D}^k [\varphi](\bx)= \mbox{p.v.}\int_{\partial D} \nabla_{\by} G^k(\bx-\by)\cdot \bnu_{\by}\varphi(\by)ds(\by) \quad \bx\in\partial D.
\]
In what follows, we denote {$S_{\partial D}^{k}, K_{\partial D}^{k,*}$ and 
$K_{\partial D}^k $} by $S_{\partial D,0}$, $K_{\partial D,0}^{*}$ and $K_{\partial D,0}$ for $k=0$. We would like to point out that, the operators $K_{\partial D,0}^{*}$ and $K_{\partial D,0}$ have the following expressions in three dimensions:
\begin{equation}\label{eq:dekk}
\begin{split}
& K_{\partial D,0}^{*}[\varphi](\bx)= \int_{\partial D} \frac{\langle\bx-\by,\bnu_\bx\rangle}{4\pi|\bx-\by|^3} \varphi(\by)ds(\by) \quad \bx\in\partial D, \\
& K_{\partial D,0} [\varphi](\bx)= \int_{\partial D} \frac{\langle\by-\bx,\bnu_\by\rangle}{4\pi|\bx-\by|^3} \varphi(\by)ds(\by) \quad \bx\in\partial D.
\end{split}
\end{equation}
We refer to \cite{CK,Ned} for the mapping properties of the operators introduced above. 

Next we introduce the potential operators for the Lam\'e system. The fundamental solution $\bGa^{\omega}=(\Gamma^{\omega}_{i,j})_{i,j=1}^N$ for the operator $\Lcal_{\tilde{\lambda},\tilde{\mu}}+\rho_e\omega^2$ can be decomposed into shear and pressure components (cf.\cite{AB:bk}):
\begin{equation}\label{eq:ef}
 \bGa^{\omega}=\bGa^{\omega}_s + \bGa^{\omega}_p,
\end{equation}
where 
\[
 \bGa^{\omega}_p=-\frac{1}{\rho_e \omega^2} \nabla \nabla G^{\tilde{k}_p} \quad \mbox{and} \quad  \bGa^{\omega}_s =\frac{1}{\rho_e \omega^2}(\tilde{k}_s^2 \mathbf{I} + \nabla \nabla) G^{\tilde{k}_s},
\]
with $\mathbf{I}$ denoting the $N\times N$ identity matrix, $G^k$ given in \eqref{eq:fu_he} and $\tilde{k}_s$ as well as $\tilde{k}_p$ defined in \eqref{pa:ksp}.
 The single layer potential operator associated with the fundamental solution $\bGa^{\omega}$ is defined by 
\begin{equation}\label{eq:single}
 \bS_{\partial D}^{\omega}[\bvarphi](\bx)=\int_{\partial D} \bGa^{\omega}(\bx-\by)\bvarphi(\by)ds(\by), \quad \bx\in\mathbb{R}^N,
\end{equation}
for $\bvarphi\in L^2(\partial D)^N$. On the boundary $\partial D$, the conormal derivative of the single layer potential satisfies the following jump formula
\begin{equation}\label{eq:jump}
 \frac{\partial \bS_{\partial D}^{\omega}[\bvarphi]}{\partial \bnu}|_{\pm}(\bx)=\left( \pm\frac{1}{2}\bI + \left(\bK_{\partial D}^{\omega}\right)^*  \right)[\bvarphi](\bx) \quad \bx\in\partial D,
\end{equation}
where
\begin{equation}\label{eq:lanp}
 \bK_{\partial D}^{\omega,*}[\bvarphi](\bx)=\mbox{p.v.} \int_{\partial D} \frac{\partial \bGa^{\omega}}{\partial \bnu(\bx)}(\bx-\by)\bvarphi(\by)ds(\by),
\end{equation}
with the subscript $\pm$ indicating the limits from outside and inside $D$, respectively. The operator $\bK_{\partial D}^{\omega,*}$ is called Neumann-Poincar\'e (N-P) operator of the Lam\'e system. In our subsequent analysis, we also need the following single layer potential operators associated with the p-wave (pressure wave) and s-wave (shear wave), respectively, 
\begin{equation}\label{eq:sps}
 \bS_{\partial D}^{\omega,i}[\bvarphi](\bx)=\int_{\partial D} \bGa^{\omega}_i(\bx-\by)\bvarphi(\by)ds(\by), \quad \bx\in \mathbb{R}^N\backslash \overline{D},
\end{equation}
where $\bvarphi(\by)\in L^2(\partial D)^N $ and the kernel functions $\bGa^{\omega}_i$, $i=p,s$ are defined in \eqref{eq:ef}. We refer to \cite{AB:bk} for the mapping properties of the operators introduced above. 
 
 With the help of the potential operators introduced above, the solution to \eqref{eq:mo} can be represented by the following integral ansatz:
\begin{equation}\label{eq:solN}
  \bu=
 \left\{
   \begin{array}{ll}
     S_{\partial D}^{\tilde{k}}[\varphi_b](\bx), & \bx\in D, \medskip\\
     \bS_{\partial D}^{\omega}[\bvarphi_e](\bx) +\bu^i, &  \bx\in \mathbb{R}^N\backslash \overline{D},
   \end{array}
 \right.
\end{equation}
for some density functions $\varphi_b\in L^2(\partial D)$ and $\bvarphi_e \in L^2(\partial D)^N$. By matching the transmission conditions on the boundary $\partial D$, along with the help of the jump formulas \eqref{eq:ju_he} and \eqref{eq:jump}, 
{it can be} verified by some straightforward calculations that the density functions  $\varphi_b, \bvarphi_e$ satisfy the following system of boundary integral equations:
\begin{equation}\label{eq:defAN}
\tilde{\Acal}(\omega,\delta) [\Phi](\bx)=F(\bx), \quad \bx\in\partial D,
\end{equation}
where
\[
\tilde{\Acal}(\omega,\delta)=  \left(
    \begin{array}{cc}
        \frac{1}{\rho_b \omega^2}\left(-\frac{I}{2} + K^{\tilde{k},*}_{\partial D}\right) &  - \bnu\cdot {\bS}_{\partial D}^{\omega}\medskip \\
     \bnu S^{\tilde{k}}_{\partial D} & \frac{I}{2} + {\bK}^{\omega,*}_{\partial D}\\
    \end{array}
  \right),
  \;
  \Phi= \left(
    \begin{array}{c}
      \varphi_b \\
     \bvarphi_e \\
    \end{array}
  \right)
   \; \mbox{and} \;
  F= \left(
    \begin{array}{c}
    \bnu\cdot \bu^i \\
     -\partial_{\bnu}  \bu^i \\
    \end{array}
  \right).
\]
Then the Minnaert resonance of the system \eqref{eq:mo} is defined for all $\omega\in\mathbb{C}$ such that the following equation holds:
\begin{equation}\label{eq:conre1}
 \tilde{\Acal}(k,\delta)[\Phi](\bx)=0,
\end{equation} 
for a nontrivial solution $\Phi\in\mathcal{H}$. {For notational convenience, 
we shall write} $\mathcal{H}:=L^2(\partial D)\times L^2(\partial D)^N$. Moreover, in our subsequent {study of} 
the Minnaert resonance, we may weaken the condition \eqref{eq:conre1} by finding a solution $\Phi\in\mathcal{H}$ with $\| \Phi\|_{\mathcal{H}}=1$ such that for $k\ll1$,
\begin{equation}\label{eq:conrew1}
 \|\tilde{\Acal}(\omega,\delta)[\Phi](\bx)\|_{\mathcal{H}}\ll1.
\end{equation}
If the condition \eqref{eq:conrew1} is fulfilled, we say that the weak (Minnaert) resonance occurs for the system \eqref{eq:mo}. In contrast to the weak Minnaert resonance, when the condition \eqref{eq:conre1} is fulfilled, we say that the strong (Minnaert) resonance occurs.

Finally, we give a remark on the definition of the Minnaert resonance introduced above. In fact, the definition of the strong (Minnaert) resonance is similar to the one introduced in \cite{AF1} for the bubble-liquid resonance. For the weak resonance, let us assume that 
\[
 \tilde{\Acal}(\omega,\delta)[\Phi](\bx)=\tilde{\Psi}.
\]
According to \eqref{eq:conrew1}, one has that 
\[
  \| \tilde{\Psi} \|_{\mathcal{H}}\ll1.
\]
Set $\Psi=\tilde{\Psi}/ \| \tilde{\Psi} \|_{\mathcal{H}}$ such that $\| {\Psi} \|_{\mathcal{H}}=1$.
If $F$ in \eqref{eq:defAN} is properly chosen which has a component being $\Psi$, one can easily conclude from \eqref{eq:solN} that the scattering wave will blow up at the order $1/\| \tilde{\Psi} \|_{\mathcal{H}}$.

\section{General requirements on the medium configuration and auxiliary results on the layer-potential operators}

In this section, we first introduce some general requirements on the medium configuration that are critical for the occurrence of the Minnaert resonances in our subsequent constructions of the bubble-elastic structures in Sections 4 and 5. Then we derive some auxiliary results for the subsequent use. 

\subsection{General requirements on the medium configuration}
{We will consider}
\begin{equation}\label{eq:as1}
 \delta=\rho_b/\rho_e=o(1), 
\end{equation}
which states that the contrast of the densities of the bubble and the elastic material is high.
Moreover, we assume that the bulk modulus of the air $\kappa$ and the compression modulus $\tilde{\lambda}$ as well as the shear modulus $\tilde{\mu}$ of the elastic material {satisfy}
\begin{equation}\label{eq:as2}
 \kappa/\tilde{\lambda}=\mathcal{O}(\delta) \quad \mbox{and} \quad \tilde{\mu}/\tilde{\lambda}=o(1).
\end{equation}
Under these assumptions, {we can easily derive}
\begin{equation}\label{eq:dtau}
 \tau=\frac{c_b}{\tilde{c}_p}=\frac{\sqrt{\kappa/\rho_b}}   {\sqrt{\left(\tilde{\lambda}+2\tilde{\mu}\right)/\rho_e}}=\mathcal{O}(1),
\end{equation}
where $c_b$ and $\tilde{c}_p$ are defined in \eqref{eq:mo} and \eqref{pa:ksp}, respectively. Indeed, the assumptions in \eqref{eq:as1} and \eqref{eq:as2} are reasonable and this is the case that air bubbles are embedded in the polydimethylsiloxane, a soft elastic material (cf. \cite{CTL}). 

As a matter of fact, the low-frequency is mainly caused by the fact that the size of the air bubble $D$ is much smaller than the wavelength of the elastic wave. Since the elastic wave can be decomposed into the compressional wave (p-wave) and the shear wave (s-wave) \cite{Kup}, we are mainly concerned in this paper with the case that the wavelength of the p-wave is much larger than the size of the bubble $D$ and the wavelength of the s-wave generically does not satisfy this requirement. That means that the Minnaert resonance is mainly caused by the p-wave. Thus by the coordinate transformation, we may assume that the size of the domain $D$ is of order $1$ and $\omega=o(1)$. Since $c_b$ is fixed, {we further have}
\[
 \tilde{k}=o(1)  \quad \mbox{and} \quad \tilde{k}_p= o(1),
\]
where $\tilde{k}$ and $\tilde{k}_p$ are defined in \eqref{eq:mo} and \eqref{pa:ksp}, respectively. 

Let $L$ be the typical length (average length) of the domain $D$. Then we introduce the following non-dimensional parameters:
\begin{equation}\label{eq:pare}
\begin{split}
 &\qquad\bx^{\prime}=\bx/L, \qquad k=\tilde{k} L, \qquad \bu^{\prime}=\bu/L,\\
&\mu =\tilde{\mu}/(\tilde{\lambda} + 2\tilde{\mu}), \quad \lambda=\tilde{\lambda}/(\tilde{\lambda} + 2\tilde{\mu}),\quad u^{\prime}=u/(\rho_b c_b^2).
\end{split}
\end{equation}
Thus from the previous assumptions, one has that
\begin{equation}\label{eq:paas}
k= o(1),\quad \delta=\rho_b/\rho_e=o(1),\quad \tau=\mathcal{O}(1), \quad \mu=o(1) \quad \mbox{and} \quad  \lambda=\mathcal{O}(1).
\end{equation}
Substituting these parameters into equation \eqref{eq:mo} and dropping the primes, one can obtain the following coupled PDE system for our subsequent study:
\begin{equation}\label{eq:mond}
  \left\{
    \begin{array}{ll}
      \mathcal{L}_{\lambda, \mu}\bu(\bx) +k^2 \tau^2\bu(\bx) =0 & \bx\in \mathbb{R}^N\backslash \overline{D},\medskip\\
        \triangle u(\bx) + k^2 u(\bx) =0    & \bx\in D,\medskip \\
   \mathbf{u}(\bx)\cdot\bnu - \frac{1}{ k^2} \nabla u(\bx)\cdot \bnu=0      & \bx\in\partial D, \medskip\\
      \partial_{{\bnu}}\mathbf{u}(\bx) +  \delta \tau^2 u(\bx)\bnu=0 & \bx\in\partial D,\medskip\\
     \bu(\bx)-\bu^i(\bx)  \qquad \mbox{satisfies the radiation condition},
    \end{array}
  \right.
\end{equation} 
where $\tau$ is defined in \eqref{eq:dtau}. Here we would like to point out that in \eqref{eq:mond}, the p-wavenumber satisfies
\[
  k_p=\frac{k \tau}{c_p}=\frac{k \tau}{\sqrt{\lambda+2\mu}}=o(1).
\]

Following our earlier discussions in \eqref{eq:solN}--\eqref{eq:conre1}, 
the solution to the system \eqref{eq:mond} can be given by 
\begin{equation}\label{eq:sol}
  \bu=
 \left\{
   \begin{array}{ll}
     S_{\partial D}^{k}[\varphi_b](\bx), & \bx\in D,\medskip \\
     \bS_{\partial D}^{k\tau}[\bvarphi_e](\bx) +\bu^i, &  \bx\in \mathbb{R}^N\backslash \overline{D},
   \end{array}
 \right.
\end{equation}
for some surface densities $(\varphi_b, \bvarphi_e)\in \mathcal{H}$ that satisfy:
\begin{equation}\label{eq:defA}
\Acal(k,\delta) [\Phi](\bx)=F(\bx), \quad \bx\in\partial D,
\end{equation}
where
\[
\Acal(k,\delta)=  \left(
    \begin{array}{cc}
        \frac{1}{k^2}\left(-\frac{I}{2} + K^{k,*}_{\partial D}\right) &  - \bnu\cdot {\bS}_{\partial D}^{k\tau}\medskip \\
      \delta \tau^2\bnu S^k_{\partial D} & \frac{I}{2} + {\bK}^{k\tau,*}_{\partial D}\\
    \end{array}
  \right),
  \;
  \Phi= \left(
    \begin{array}{c}
      \varphi_b \\
     \bvarphi_e \\
    \end{array}
  \right)
   \; \mbox{and} \;
  F= \left(
    \begin{array}{c}
    \bnu\cdot \bu^i \\
     -\partial_{\bnu}  \bu^i \\
    \end{array}
  \right).
\]
Based on our earlier definitions of the strong and weak Minnaert resonances, we shall establish the sufficient conditions for the occurrence of resonances associated with \eqref{eq:sol}--\eqref{eq:defA}, that is, 
{it holds for the strong resonance of \eqref{eq:mond} that  
\begin{equation}\label{eq:conre}
 \Acal(k,\delta)[\Phi](\bx)=0, 
\end{equation} 
while it holds for the weak resonance of \eqref{eq:mond} that}
\begin{equation}\label{eq:conrew}
 \|\Acal(k,\delta)[\Phi](\bx)\|_{\mathcal{H}}\ll1
\end{equation}
for a nontrivial $\Phi\in\mathcal{H}$ with $\| \Phi\|_{\mathcal{H}}=1$ and $k\ll1$,


\subsection{Some auxiliary results}

We first introduce the following lemmas.
\begin{lem}\label{lem:ul}
 If a vector field $\bw\in H^{1}(\mathbb{R}^3\backslash \overline{D})^3$ satisfies the following three equations:
\[
  \triangle\bw + k^2 \bw=0, \quad \nabla\times\bw=0 \quad \mbox{and} \quad \nabla\cdot\bw=0,
\]
with $k\neq 0$, then $\bw\equiv 0$.
\end{lem}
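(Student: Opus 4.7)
The plan is to reduce the problem to a one-line observation via the classical three-dimensional vector identity relating the Laplacian to divergence and curl. Once regularity is in place, the argument is essentially algebraic.

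First I would use elliptic regularity: since $\bw$ is a distributional solution of the elliptic Helmholtz equation $\triangle \bw + k^2 \bw = 0$ on the open set $\mathbb{R}^3 \setminus \overline{D}$, the $H^1$ hypothesis bootstraps to interior smoothness. In particular, $\bw$ is $C^2$ on $\mathbb{R}^3 \setminus \overline{D}$, so the classical pointwise identity $\nabla \times (\nabla \times \bw) = \nabla(\nabla \cdot \bw) - \triangle \bw$, which holds for smooth vector fields in three dimensions, can be applied directly.

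Next I would substitute the two hypotheses $\nabla \cdot \bw = 0$ and $\nabla \times \bw = 0$ into this identity. Both sides vanish identically, forcing $\triangle \bw \equiv 0$ on $\mathbb{R}^3 \setminus \overline{D}$. Combining this with the Helmholtz equation yields $k^2 \bw \equiv 0$, and since $k \neq 0$ we conclude $\bw \equiv 0$, as desired.

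I do not foresee any genuine obstacle. Neither the $H^1$ regularity nor the specific geometry of the exterior domain plays a substantive role in the proof; they are presumably included only because in the applications $\bw$ will be constructed a priori as an $H^1$ field on precisely this domain, and the lemma is invoked in that setting to conclude triviality. The only subtlety worth checking carefully is that the identity $\triangle = \nabla\,\nabla\cdot - \nabla\times\nabla\times$ is a three-dimensional phenomenon, which is consistent with the lemma being stated only for $N=3$; an analogous two-dimensional statement would require a separate formulation.
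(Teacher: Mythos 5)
Your proposal is correct and follows essentially the same route as the paper: both rest on the identity $\nabla\times\nabla\times \bw=\nabla(\nabla\cdot \bw)-\triangle \bw$, substitute the curl-free and divergence-free conditions together with the Helmholtz equation, and conclude $k^2\bw=0$, hence $\bw\equiv 0$. Your added remark on interior elliptic regularity to justify the pointwise identity is a harmless refinement that the paper leaves implicit.
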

\begin{proof}
{Direct calculations} show that 
\[
 \nabla\times\nabla\times \bw=\nabla \nabla\cdot \bw -\triangle \bw= 0+k^2 \bw =0.
\]
Thus one can obtain $\bw\equiv 0$ since $k^2\neq 0$.
\end{proof}

%
Recall that the operator $\bS_{\partial D}^{\omega,s}: L^2(\partial D)^3 \rightarrow H^1(\mathbb{R}^3\backslash \overline{D})^3$ is defined in \eqref{eq:sps}. In what follows, if $\bvarphi\in L^2(\partial D)^3$ satisfies 
\[
 \int_{\partial D} \frac{1}{\rho \omega^2}(k_s^2 \mathbf{I} + \nabla \nabla) G^{k_s}(\bx-\by)\bvarphi(\by) ds(\by)=0, \quad \bx\in\mathbb{R}^3\backslash \overline{D},
\]
then we say that $\bvarphi\in \ker\left(\bS_{\partial D}^{\omega,s}\right)$.

\begin{lem}\label{lem:0}
For $\bvarphi\in \ker\left(\bS_{\partial D}^{\omega,s}\right)$, one has that
\[
\int_{\partial D} \nabla \nabla G^{0}(\bx-\by)\bvarphi(\by) ds(\by) =0,  \quad \bx\in\mathbb{R}^3\backslash {D},
\]
where $G^{k}(\bx-\by)$ is defined in \eqref{eq:fu_he} with $k=0$.
\end{lem}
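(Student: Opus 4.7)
The plan is to rewrite the s-part single layer as a curl-of-curl of a scalar Helmholtz single layer, invoke Lemma~\ref{lem:ul} to obtain a curl-free condition in the exterior, and then translate this curl-free property into the desired divergence-type statement at wavenumber $k=0$. The first ingredient is the vector-calculus identity $\nabla\times\nabla\times = \nabla\nabla\cdot - \triangle$ combined with $\triangle G^{k_s} = -k_s^2 G^{k_s}$ away from the origin, which recasts the s-part kernel as $\bGa^\omega_s(\bx)\bvarphi = \frac{1}{\rho\omega^2}\nabla\times\nabla\times(G^{k_s}(\bx)\bvarphi)$ for any constant vector $\bvarphi$. Integrating in $\by$ over $\partial D$ gives $\bS^{\omega,s}_{\partial D}[\bvarphi](\bx) = \frac{1}{\rho\omega^2}\nabla\times\nabla\times \widetilde{\bS}(\bx)$, where $\widetilde{\bS}(\bx) := \int_{\partial D} G^{k_s}(\bx-\by)\bvarphi(\by)\,ds(\by)$ is the vector single layer with scalar Helmholtz kernel; the hypothesis $\bvarphi\in\ker(\bS^{\omega,s}_{\partial D})$ thus becomes $\nabla\times\nabla\times\widetilde{\bS}\equiv 0$ in $\RR^3\setminus\overline{D}$.

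Next I set $\mathbf{W} := \nabla\times\widetilde{\bS}$ and check the hypotheses of Lemma~\ref{lem:ul}. The field $\mathbf{W}$ is automatically divergence-free (divergence of a curl); it satisfies $(\triangle + k_s^2)\mathbf{W} = 0$ in the exterior because $\widetilde{\bS}$ satisfies the scalar Helmholtz equation componentwise and the curl commutes with $\triangle$; it is curl-free in the exterior by the first step; and it lies in $H^1(\RR^3\setminus\overline{D})^3$ with the natural decay inherited from the outgoing potential $\widetilde{\bS}$. Lemma~\ref{lem:ul} with $k=k_s\neq 0$ then forces $\mathbf{W}\equiv 0$ in $\RR^3\setminus\overline{D}$, so that $\widetilde{\bS}$ is curl-free in the exterior, equivalently $\int_{\partial D}\nabla_\bx G^{k_s}(\bx-\by)\times\bvarphi(\by)\,ds(\by)\equiv 0$ for every $\bx\in\RR^3\setminus\overline{D}$.

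Finally, I reformulate the conclusion: since $\int_{\partial D}\nabla\nabla G^0(\bx-\by)\bvarphi(\by)\,ds(\by) = \nabla\psi_0(\bx)$ with $\psi_0(\bx) := \int_{\partial D}\nabla G^0(\bx-\by)\cdot\bvarphi(\by)\,ds(\by)$ harmonic in the exterior and decaying at infinity, the claim is equivalent to $\psi_0\equiv 0$ there, which by uniqueness of the exterior Dirichlet problem reduces to $\psi_0|_+ = 0$ on $\partial D$. I would derive this by decomposing $\bvarphi = \varphi_\nu\bnu + \bvarphi_T$ and using surface integration by parts to express both the vanishing trace $(\nabla\times\widetilde{\bS})|_+$ obtained above and the trace $\psi_0|_+$ in terms of the boundary operators $S^k_{\partial D}$ and $K^k_{\partial D}$ acting on $\varphi_\nu$ and the surface divergence $\nabla_T\cdot\bvarphi_T$, and then matching them pointwise. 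This last step is the main obstacle: the curl-free property of $\widetilde{\bS}$ is a statement at the Helmholtz wavenumber $k_s$, whereas the identity we need lives at $k=0$, so linking them requires carefully exploiting the interplay between the boundary integral operators at both wavenumbers (for instance via the smoothness of the difference kernel $G^{k_s}-G^0$) to extract the required $k=0$ trace identity from the $k_s$-level curl-free information.
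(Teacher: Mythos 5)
Your reductions are correct as far as they go, but the proof stops exactly where the lemma's real content begins, and you acknowledge this yourself. Writing $\widetilde{\bS}(\bx)=\int_{\partial D}G^{k_s}(\bx-\by)\bvarphi(\by)\,ds(\by)$, the hypothesis $\bvarphi\in \ker\bigl(\bS_{\partial D}^{\omega,s}\bigr)$ says $k_s^2\widetilde{\bS}+\nabla\bigl(\nabla\cdot\widetilde{\bS}\bigr)=0$ in $\RR^3\setminus\overline{D}$, which already exhibits $\widetilde{\bS}=-k_s^{-2}\nabla\bigl(\nabla\cdot\widetilde{\bS}\bigr)$ as a gradient field; hence $\nabla\times\widetilde{\bS}=0$ is immediate and the detour through Lemma~\ref{lem:ul} adds no new information. (Incidentally, your assertion that $\mathbf{W}=\nabla\times\widetilde{\bS}$ lies in $H^1(\RR^3\setminus\overline{D})^3$ is false: an outgoing single-layer potential and its derivatives decay only like $|\bx|^{-1}$, which is not square integrable at infinity in 3D; this is harmless only because the proof of Lemma~\ref{lem:ul} is pointwise algebra and never uses the $H^1$ hypothesis.) Similarly, reformulating the target as $\psi_0\equiv 0$ for the decaying harmonic function $\psi_0=\nabla\cdot\,$(Laplace single layer of $\bvarphi$) is fine but elementary. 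The essential step---passing from an identity at the shear wavenumber $k_s$ (which is \emph{not} small here) to an identity at wavenumber $0$---is exactly the step you leave open: the proposed ``matching of traces'' via a normal/tangential splitting of $\bvarphi$ is only a hope, with no mechanism indicated for why the $k_s$-level gradient/curl-free structure should force $\psi_0|_+=0$ on $\partial D$. As written, the argument merely restates the hypothesis in equivalent forms and does not reach the conclusion.

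For comparison, the paper bridges the two wavenumbers directly at the kernel level: using the hypothesis to trade $\int_{\partial D}\nabla\nabla G^{k_s}(\bx-\by)\bvarphi\,ds$ for $-k_s^2\int_{\partial D}G^{k_s}(\bx-\by)\bvarphi\,ds$, it writes $-\int_{\partial D}\nabla\nabla G^{0}(\bx-\by)\bvarphi\,ds=\int_{\partial D}\bigl(k_s^2G^{k_s}+\nabla\nabla(G^{k_s}-G^{0})\bigr)(\bx-\by)\bvarphi\,ds$, exploits the smoothness/expansion of the difference kernel $G^{k_s}-G^{0}$ together with the decay of the left-hand side, and then applies $\triangle_\bx$ (the left-hand side being harmonic in the exterior) to extract $\int_{\partial D}\nabla\nabla\bigl(|\bx-\by|^{-1}\bigr)\bvarphi\,ds=0$, finishing by continuity up to $\partial D$. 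Some explicit device of this type---an identity or expansion relating the $k_s$-kernel to the $0$-kernel, to which one can apply the hypothesis and a harmonicity/decay argument---is what your outline must supply; without it the crucial step remains an unproven assertion.
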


\begin{proof}
From the definition of the fundamental solution in \eqref{eq:ef}, if $\bvarphi\in \ker\left(\bS_{\partial D}^{\omega,s}\right)$, one has that
\[
  \int_{\partial D} \frac{1}{\rho \omega^2}(k_s^2 \mathbf{I} + \nabla \nabla) G^{k_s}(\bx-\by)\bvarphi(\by) ds(\by)=0, \quad \bx\in\mathbb{R}^3\backslash \overline{D}.
\]
Thus one can further have that for $\bx\in\mathbb{R}^3\backslash \overline{D}$, 
\begin{equation}\label{eq:in1}
-\int_{\partial D} \nabla \nabla G^{0}(\bx-\by)\bvarphi(\by) ds(\by) =  \int_{\partial D} \left( k_s^2  G^{k_s} + \nabla \nabla (G^{k_s}-G^0) \right)(\bx-\by)\bvarphi(\by) ds(\by).
\end{equation}
From the expression
\[
G^{0}(\bx-\by)= -\frac{1}{4\pi |\bx-\by|},
\]
the integral possesses the following property
\[
\int_{\partial D} \nabla \nabla G^{0}(\bx-\by)\bvarphi(\by) ds(\by) \rightarrow 0 \quad\mbox{as}\quad |\bx|\rightarrow \infty.
\]
Therefore from the expansion of the fundamental solution $G^{k_s}(\bx-\by)$ and \eqref{eq:in1}, one can obtain that for $\bx\in\mathbb{R}^3\backslash \overline{D}$
\begin{equation}\label{eq:in2}
-\int_{\partial D} \nabla \nabla G^{0}(\bx-\by)\bvarphi(\by) ds(\by) = -\frac{k_s^2}{4\pi}  \int_{\partial D} \left(  \frac{ 1 }{ |\bx-\by|} + \nabla \nabla (|\bx-\by|) \right) \bvarphi(\by) ds(\by).
\end{equation}
Taking the Laplace operator $\triangle$ on both sides of the last equation gives that for $\bx\in\mathbb{R}^3\backslash \overline{D}$
\[
0= -\frac{k_s^2}{4\pi}  \int_{\partial D}  \nabla \nabla \left(  \frac{ 1 }{ |\bx-\by|} \right) \bvarphi(\by) ds(\by).
\]
The proof is completed by noting that the function on the right side of the equation \eqref{eq:in2} is continuous from $\mathbb{R}^3\backslash \overline{D}$ to $\mathbb{R}^3\backslash {D}$.

\end{proof}

\begin{lem}\label{lem:kerss}
If $\bvarphi\in \ker\left(\bS_{\partial D}^{\omega,s}\right)$ does not depend on $k_s$, then one has that
\[
 \bS_{\partial D}^{\omega,p}[\bvarphi](\bx)= \frac{1}{\lambda+2\mu} \int_{\partial D}  G^{k_p}(\bx-\by)\bvarphi(\by) ds(\by), \quad \bx\in\mathbb{R}^3\backslash {D},
\]
where the operators $\bS_{\partial D}^{\omega,i}$ ($i=p, s$) are defined in \eqref{eq:sps}. 
\end{lem}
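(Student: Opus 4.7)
The plan is to reduce the claim to the vanishing of $\nabla\times\nabla\times \int_{\partial D} G^{k_p}(\bx-\by)\bvarphi(\by)\,ds(\by)$ in the exterior of $D$, and then to close that reduced identity using the $k_s$-independence hypothesis together with Lemma~\ref{lem:0}.

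Starting from $\bGa^{\omega}_p = -\frac{1}{\rho_e\omega^2}\nabla\nabla G^{k_p}$ and splitting $G^{k_p} = G^0 + (G^{k_p}-G^0)$, I would apply Lemma~\ref{lem:0} to eliminate the $G^0$ contribution in $\bS_{\partial D}^{\omega,p}[\bvarphi]$. Pulling derivatives outside the integral converts the remainder into $\nabla\nabla\cdot\int_{\partial D}(G^{k_p}-G^0)\bvarphi\,ds$. Applying the standard vector identity $\nabla\nabla\cdot u = \Delta u + \nabla\times\nabla\times u$, together with the PDEs $\Delta\int G^0\bvarphi\,ds = 0$ and $(\Delta + k_p^2)\int G^{k_p}\bvarphi\,ds = 0$ in $\mathbb{R}^3\setminus\overline{D}$, and Lemma~\ref{lem:0} once more in the form $\nabla\times\nabla\times\int G^0\bvarphi\,ds = 0$, the computation collapses to
\[
\bS_{\partial D}^{\omega,p}[\bvarphi] = \frac{k_p^2}{\rho_e\omega^2}\int_{\partial D} G^{k_p}\bvarphi\,ds - \frac{1}{\rho_e\omega^2}\nabla\times\nabla\times\int_{\partial D} G^{k_p}\bvarphi\,ds.
\]
The dispersion identity $k_p^2/(\rho_e\omega^2) = 1/(\lambda+2\mu)$, which follows from \eqref{pa:ksp}, matches the first term to the desired right-hand side, so the lemma reduces to the single identity $\nabla\times\nabla\times \int_{\partial D} G^{k_p}(\bx-\by)\bvarphi(\by)\,ds(\by) = 0$ in $\mathbb{R}^3\setminus\overline{D}$ (and then up to $\mathbb{R}^3\setminus D$ by continuity).

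To close this reduced identity, the same vector-calculus manipulation applied to the hypothesis $\bvarphi\in\ker\bS_{\partial D}^{\omega,s}$ yields $\nabla\times\nabla\times\int G^{k_s}\bvarphi\,ds = 0$, while Lemma~\ref{lem:0} provides the corresponding vanishing at $k = 0$. Because $\bvarphi$ is held fixed and does not depend on $k_s$, and because $\nabla\times\nabla\times\int G^k\bvarphi\,ds$ is analytic in the parameter $k$, the kernel relation is promoted from an equation at the single value $k = k_s$ to an identity in $k$, and in particular it holds at $k = k_p$. The main obstacle is precisely this last transfer step. The cleanest justification combines analyticity in $k$ with the interpretation that the $k_s$-independence of $\bvarphi$ forces the curl-curl expression to vanish across a range of admissible wavenumbers, from which the vanishing at $k = k_p$ follows by analytic continuation. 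An alternative that avoids this subtlety is to expand $\int(k^2 I + \nabla\nabla)G^k\bvarphi\,ds$ as a Taylor series in $k$ around zero and to iterate the argument of Lemma~\ref{lem:0}, showing that every Taylor coefficient vanishes and hence the series is identically zero at $k = k_p$ as well.
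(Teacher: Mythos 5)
Your vector-calculus reduction is sound: writing $\int_{\partial D}\nabla\nabla G^{k}(\bx-\by)\bvarphi(\by)\,ds(\by)=\nabla\bigl(\nabla\cdot\int_{\partial D}G^{k}\bvarphi\,ds\bigr)$, using $\nabla(\nabla\cdot\mathbf{u})=\Delta \mathbf{u}+\nabla\times\nabla\times \mathbf{u}$ together with the Helmholtz equation in $\mathbb{R}^3\setminus\overline{D}$ and the dispersion identity $k_p^2/(\rho\omega^2)=1/(\lambda+2\mu)$, the lemma is indeed equivalent to the single identity $\nabla\times\nabla\times\int_{\partial D}G^{k_p}(\bx-\by)\bvarphi(\by)\,ds(\by)=0$ in the exterior (the splitting off of $G^0$ and the first appeal to Lemma \ref{lem:0} are redundant for this part). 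Note, moreover, that this reduced identity is nothing other than the kernel relation $\int_{\partial D}(k^2\mathbf{I}+\nabla\nabla)G^{k}(\bx-\by)\bvarphi(\by)\,ds(\by)=0$ evaluated at $k=k_p$, i.e.\ exactly the relation defining $\ker\bigl(\bS_{\partial D}^{\omega,s}\bigr)$ with $k_s$ replaced by $k_p$.

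The gap is in how you close it. Analyticity in $k$ plus vanishing at the two isolated values $k=0$ (from Lemma \ref{lem:0}) and $k=k_s$ (the hypothesis) gives you nothing by continuation: an analytic family can vanish at two points without vanishing identically, so ``in particular it holds at $k=k_p$'' does not follow. Your fallback, that all Taylor coefficients at $k=0$ vanish by ``iterating'' Lemma \ref{lem:0}, is asserted rather than carried out, and a relation imposed at the single wavenumber $k_s$ does not determine those coefficients. What actually saves the statement is the reading of the hypothesis ``$\bvarphi$ does not depend on $k_s$'' that the paper intends: the same fixed density satisfies the kernel relation for every value of $k_s$, i.e.\ with $k_s$ regarded as a free parameter (this is precisely what the remark after the lemma verifies for $\bvarphi=\bnu$). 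Under that reading the paper's proof is a one-line substitution of $k_p$ for $k_s$ in the kernel relation, followed by $k_p^2/(\rho\omega^2)=1/(\lambda+2\mu)$ and continuity of $S_{\partial D}^{k_p}$ up to $\partial D$; no curl-curl manipulation and no continuation argument are needed. If you adopt that substitution in place of the analyticity step, your argument becomes a correct but longer route to the same identity; as written, the transfer from $k_s$ to $k_p$ is not justified.
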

\begin{proof}
 From the definition of the fundamental solution in \eqref{eq:ef}, if $\bvarphi\in \ker\left(\bS_{\partial D}^{\omega,s}\right)$, one has that
\[
  \int_{\partial D} \frac{1}{\rho \omega^2}(k_s^2 \mathbf{I} + \nabla \nabla) G^{k_s}(\bx-\by)\bvarphi(\by) ds(\by)=0, \quad \bx\in\mathbb{R}^3\backslash \overline{D}.
\]
Replacing $k_s$ with $k_p$ in the last equation yields that for $ \bx\in\mathbb{R}^3\backslash \overline{D}$,
\[
\int_{\partial D} \frac{-1}{\rho \omega^2}(\nabla \nabla) G^{k_p}(\bx-\by)\bvarphi(\by) ds(\by) = \frac{1}{\lambda+2\mu} \int_{\partial D}  G^{k_p}(\bx-\by)\bvarphi(\by) ds(\by).
\]
 The proof is readily completed by noting that the operator $S_{\partial D}^{k_p}$ is continuous from $\mathbb{R}^3\backslash \overline{D}$ to $\mathbb{R}^3\backslash {D}$.

\end{proof}

\begin{rem}
{We can derive} that $\ker\left(\bS_{\partial D}^{\omega,s}\right)\neq \emptyset$ and $\bnu\in \ker\left(\bS_{\partial D}^{\omega,s}\right)$. Indeed, we set 
\begin{equation}\label{eq:bw}
 \bw(\bx)=\int_{\partial D} \frac{1}{\rho \omega^2}(k_s^2 \mathbf{I} + \nabla \nabla) G^{k_s}(\bx-\by)\bnu_\by ds(\by), \quad \bx\in \mathbb{R}^3\backslash \overline{D}.
\end{equation}
It is directly verified that the function $\bw$ defined in \eqref{eq:bw} satisfies the following two equations in $\mathbb{R}^3\backslash \overline{D}$:
\[
 \triangle \bw +k_s^2 \bw=0 \quad \mbox{and} \quad \nabla\cdot\bw=0.
\]
With the help the identities 
\[
 \nabla\times \nabla =0 \quad \mbox{and} \quad  \nabla_\bx G^{k_s}(\bx-\by) = -\nabla_\by G^{k_s}(\bx-\by),
\]
one furthermore has for $\bx\in \mathbb{R}^3\backslash \overline{D}$ that 
\[
\begin{split}
 \nabla\times\bw &=\int_{\partial D} \frac{1}{\rho \omega^2} \nabla_\bx\times(k_s^2 \mathbf{I} + \nabla \nabla) G^{k_s}(\bx-\by)\bnu_\by ds(\by) \\
 & = \frac{1}{\rho \omega^2}\int_{\partial D} (k_s^2 \nabla_\bx\times \mathbf{I} + \nabla_\bx\times\nabla \nabla) G^{k_s}(\bx-\by)\bnu_\by ds(\by) \\
 & = \frac{-k_s^2 }{\rho \omega^2}\int_{\partial D} \nabla_\by G^{k_s}(\bx-\by)\times \bnu_\by ds(\by)  \\
& = \frac{k_s^2 }{\rho \omega^2}\int_{ D} \nabla_\by\times\nabla_\by G^{k_s}(\bx-\by) d\by  =0.
\end{split}
\]
Finally, Lemma \ref{lem:ul} shows that $\bw$ defined in \eqref{eq:bw} vanishes in $\mathbb{R}^3\backslash \overline{D}$ and one can conclude that $\bnu\in \ker\left(\bS_{\partial D}^{\omega,s}\right)$ and  $\bnu$ does not depend on $k_s$.
\end{rem}

\begin{lem}\label{lem:ash}
For the operators $S_{\partial D}^{k}: L^2(\partial D) \rightarrow H^1(\partial D)$ and $K_{\partial D}^{k,*}: L^2(\partial D)\rightarrow L^2(\partial D)$ defined in \eqref{eq:s_h} and \eqref{eq:ju_he}, respectively, 
we have the following asymptotic expansions {in three dimensions {(cf. \cite{AF1})}:
\begin{equation}\label{eq:hsa}
 S_{\partial D}^{k} =\sum_{j=0}^{+\infty} k^j  S_{\partial D,j} , \quad 
 K_{\partial D}^{k,*}  =\sum_{j=0}^{+\infty} k^j  K_{\partial D,j}^{*} 
\end{equation}
}
where
\[
  S_{\partial D,j}[\varphi](\bx)=-\frac{\rmi}{4\pi}\int_{\partial D}\frac{(\rmi|\bx-\by|)^{j-1}}{j!}\varphi(\by)ds(\by),
\]
and 
\[
 K_{\partial D,j}^{*}[\varphi](\bx)=-\frac{\rmi^j(j-1)}{4\pi j!}\int_{\partial D} |\bx-\by|^{j-3}(\bx-\by)\cdot\bnu_\bx \varphi(\by) ds(\by).
\]
{Moreover, $S_{\partial D,j}$ and $K_{\partial D,j}^{*} $are uniformly bounded with respect to $j$, 
and the two series in \eqref{eq:hsa} are convergent in $\mathcal{L}(L^2(\partial D), H^1(\partial D))$ 
and $\mathcal{L}(L^2(\partial D))$, respectively. 
}
\end{lem}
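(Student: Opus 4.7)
The plan is to obtain both expansions by directly Taylor-expanding the exponential factor in the three-dimensional Helmholtz fundamental solution $G^k(\bx-\by) = -e^{\rmi k|\bx-\by|}/(4\pi|\bx-\by|)$ and then inserting the resulting series termwise into the definitions of $S_{\partial D}^k$ and $K_{\partial D}^{k,*}$. Writing $e^{\rmi k|\bx-\by|} = \sum_{j\geq 0}(\rmi k|\bx-\by|)^j/j!$ and dividing by $|\bx-\by|$ yields, after a routine factor rearrangement, exactly the stated formula for $S_{\partial D,j}$.

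For $K_{\partial D}^{k,*}$, I would first compute $\nabla_\bx G^k(\bx-\by) = (\bx-\by)\, e^{\rmi k|\bx-\by|}(1-\rmi k|\bx-\by|)/(4\pi|\bx-\by|^3)$, and then expand the product $e^{\rmi k|\bx-\by|}(1-\rmi k|\bx-\by|)$ as a telescoping combination of two exponential series. A short index shift gives $\sum_{j\geq 0}(1-j)(\rmi k|\bx-\by|)^j/j!$, from which, after pairing with $\bnu_\bx$, the stated coefficient $K_{\partial D,j}^*$ can be read off; note in particular that the $j=1$ contribution vanishes thanks to the factor $j-1$.

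For the uniform boundedness in $j$, set $R:=\mathrm{diam}(D)$. For $S_{\partial D,j}$ with $j\geq 1$, the kernel is pointwise dominated by $R^{j-1}/j!$ on $\partial D\times\partial D$, and a standard estimate then controls the $L^2\to H^1$ norm by $CR^{j-1}/j!$; the case $j=0$ is the classical Laplace single-layer on a $C^2$ surface, whose $L^2(\partial D)\to H^1(\partial D)$ continuity is standard. For $K_{\partial D,j}^*$, the $C^2$ regularity of $\partial D$ yields the standard cancellation $(\bx-\by)\cdot\bnu_\bx = \mathcal{O}(|\bx-\by|^2)$, so the kernel scales like $|\bx-\by|^{j-1}/j!$, giving an $L^2(\partial D)$ operator-norm bound of order $R^{j-1}/j!$ for $j\geq 2$; the $j=1$ term vanishes, and the $j=0$ case is the classical Neumann–Poincar\'e operator, whose $L^2$-boundedness on a $C^2$ surface follows from Calder\'on–Zygmund theory.

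Finally, the operator-norm convergence of both series is immediate from the factorial denominators: both $\sum_{j\geq 0}|k|^j \|S_{\partial D,j}\|$ and $\sum_{j\geq 0}|k|^j\|K_{\partial D,j}^*\|$ are dominated by $Ce^{|k|R}$ and therefore converge absolutely in the appropriate operator norms, which also legitimises the termwise integration used to derive the expansion. The main obstacle lies in isolating the singular $j=0$ terms, where the classical boundedness theorems for the Laplace single-layer and Neumann–Poincar\'e operators on $C^2$ surfaces must be invoked; all higher-order terms are then controlled by elementary pointwise estimates on the kernels.
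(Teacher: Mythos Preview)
Your proposal is correct and follows the standard derivation: Taylor-expand $e^{\rmi k|\bx-\by|}$, read off the coefficients of $S_{\partial D,j}$ directly, and for $K_{\partial D,j}^*$ use the telescoping identity $e^{\rmi kr}(1-\rmi kr)=\sum_{j\geq 0}(1-j)(\rmi kr)^j/j!$ after differentiating the fundamental solution; the boundedness and convergence arguments via the $C^2$ cancellation $(\bx-\by)\cdot\bnu_\bx=\mathcal{O}(|\bx-\by|^2)$ and factorial decay are the right ones. Note, however, that the paper does not actually prove this lemma: it is stated with a reference to \cite{AF1}, so there is no in-paper argument to compare against, but your route is precisely the one underlying that citation.
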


As discussed earlier, only the wavelength of the p-wave is required to be asymptotically larger than the size of the domain $D$ and the wavelength of the s-wave is not required to satisfy such a requirement, thus the low-frequency resonance is mainly caused by the p-wave and the s-wave generically makes no contribution. Therefore, in the following analysis for the low-frequency resonance, we choose to mainly consider the density function $\bvarphi\in \ker\left(\bS_{\partial D}^{\omega,s}\right)$, which can be proved not depending on $k_s$ later. From Lemma \ref{lem:kerss}, we can focus ourselves on the operator $\bS_{\partial D}^{\omega,p}$ with the kernel $\delta_{ij} G^{k_p}/(\lambda+2\mu)$ and the operator $\bK_{\partial D}^{\omega,p,*}: L^2(\partial D)^3 \rightarrow L^2(\partial D)^3$ defined in \eqref{eq:lanp} with the kernel function $\bGa^{\omega}$ replaced by $\delta_{ij} G^{k_p}/(\lambda+2\mu)$. By straightforward calculations, we have the following asymptotic expansions for the operators $\bS_{\partial D}^{\omega,p}$ and $\bK_{\partial D}^{\omega,p,*}$ {for the density function $\bvarphi\in \ker\left(\bS_{\partial D}^{\omega,s}\right)$} that does not depend on $k_s$.
\begin{lem}\label{lem:asl}
{For the density function} $\bvarphi\in \ker\left(\bS_{\partial D}^{\omega,s}\right)$ does not depend on $k_s$, 
the operators $\bS_{\partial D}^{\omega,p}$ from $L^2(\partial D)^3$ to $H^1(\partial D)^3$ and $\bK_{\partial D}^{\omega,p,*}$ from $L^2(\partial D)^3$ to $L^2(\partial D)^3$ enjoy the following asymptotic expansions {in three dimensions:
\begin{equation}\label{eq:lsa}
 \bS_{\partial D}^{\omega,p} =  \sum_{j=0}^{+\infty} k_p^j  \bS_{\partial D,j}^{p} , \quad 
 \bK_{\partial D}^{\omega,p,*} = \sum_{j=0}^{+\infty} k_p^j \bK_{\partial D,j}^{p,*} ,
\end{equation}
}
where
\[
  \bS_{\partial D,j}^{p}[\bvarphi](\bx)=-\frac{\rmi}{4\pi}\int_{\partial D}\frac{(\rmi|\bx-\by|)^{j-1}}{j!}\bvarphi(\by)ds(\by),
\]
and 
\begin{equation}\label{eq:apk}
 \bK_{\partial D,j}^{p,*} [\bvarphi](\bx)=\frac{\lambda}{\lambda+2\mu}\mathbf{R}_{1,j} [\bvarphi](\bx) + \frac{\mu}{\lambda+2\mu} \mathbf{R}_{2,j} [\bvarphi](\bx), 
\end{equation}
{with $\mathbf{R}_{1,j} $ and $\mathbf{R}_{2,j}$ given by}
\[
 \mathbf{R}_{1,j} [\bvarphi](\bx) =-\frac{\rmi^j(j-1)\bnu_\bx}{4\pi j!}\int_{\partial D} |\bx-\by|^{j-3}\langle\bx-\by,\bvarphi(\by)\rangle ds(\by), 
\]
and 
\[
\begin{split}
 \mathbf{R}_{2,j} [\bvarphi](\bx) =&-\frac{\rmi^j(j-1)}{4\pi j!} \left( \int_{\partial D} |\bx-\by|^{j-3}\langle\bx-\by, \bnu_\bx\rangle \bvarphi(\by) ds(\by) +\right. \\
& \qquad \qquad \left. \int_{\partial D} |\bx-\by|^{j-3}(\bx-\by) \langle \bnu_\bx, \bvarphi(\by)\rangle ds(\by)  \right) .
\end{split}
\]
{Moreover, $\bS_{\partial D,j}^{p}$ and $\bK_{\partial D,j}^{p,*}$ are 
uniformly bounded with respect to $j$, and the two series in \eqref{eq:lsa} are convergent in $\mathcal{L}(L^2(\partial D)^3, H^1(\partial D)^3)$ and $\mathcal{L}(L^2(\partial D)^3)$, respectively.
}
\end{lem}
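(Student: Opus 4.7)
The plan is to reduce both expansions to the scalar Helmholtz case (Lemma \ref{lem:ash}) by invoking the simplification afforded by Lemma \ref{lem:kerss}, after which only routine term-by-term differentiation of the fundamental solution in $k_p$ is required. For the single-layer part: on the subspace of $k_s$-independent elements of $\ker(\bS_{\partial D}^{\omega,s})$, Lemma \ref{lem:kerss} rewrites the p-wave single layer as a scalar Helmholtz single layer $S_{\partial D}^{k_p}$ (times the identity matrix, up to the constant $1/(\lambda+2\mu)$) acting componentwise on $\bvarphi$. Applying the expansion of Lemma \ref{lem:ash} with $k$ replaced by $k_p$ therefore yields the series with coefficients $\bS_{\partial D,j}^{p}$ exactly as stated, together with uniform boundedness in $j$ and convergence in $\mathcal{L}(L^2(\partial D)^3, H^1(\partial D)^3)$.

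For the expansion of $\bK_{\partial D}^{\omega,p,*}$, I would apply the Lam\'e co-normal derivative $\partial_{\bnu}\bw = \lambda(\nabla\cdot\bw)\bnu_\bx + 2\mu(\nabla^s\bw)\bnu_\bx$ to
\[
\bw(\bx) = \frac{1}{\lambda+2\mu}\int_{\partial D} G^{k_p}(\bx-\by)\bvarphi(\by)\,ds(\by).
\]
Differentiation under the integral produces one divergence contribution of the form $\frac{\lambda}{\lambda+2\mu}\bnu_\bx \int \nabla_\bx G^{k_p}\cdot\bvarphi\,ds$ and, via the identity $(\ba\otimes\mathbf{b})\mathbf{c} = (\mathbf{b}\cdot\mathbf{c})\ba$, two symmetric-gradient contributions combining into $\frac{\mu}{\lambda+2\mu}\int\left(\nabla_\bx G^{k_p}(\bvarphi\cdot\bnu_\bx) + \bvarphi(\nabla_\bx G^{k_p}\cdot\bnu_\bx)\right) ds$. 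Substituting the termwise gradient of the Taylor series
\[
\nabla_\bx G^{k_p}(\bx-\by) = -\sum_{j\ge 0}\frac{\rmi^{\,j}(j-1)\,k_p^{\,j}}{4\pi\, j!}\,|\bx-\by|^{j-3}(\bx-\by)
\]
and collecting powers of $k_p$ reproduces precisely the operators $\mathbf{R}_{1,j}$ and $\mathbf{R}_{2,j}$ of the statement, which together form $\bK_{\partial D,j}^{p,*}$ via \eqref{eq:apk}. Passage from the exterior derivative to the principal-value operator on $\partial D$ is handled through the jump formula \eqref{eq:jump}; the $\pm\frac{1}{2}\bI$ jump is absorbed into the $j=0$ term, mirroring the scalar case.

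The main obstacle is securing operator-norm convergence of the series uniformly in $j$. For $j\geq 2$ the kernels $|\bx-\by|^{j-1}$ and $|\bx-\by|^{j-3}(\bx-\by)$ are continuous on the compact set $\partial D \times \partial D$, with sup-norm controlled by $\mathrm{diam}(D)^{j-1}$ up to constants, so the factor $1/j!$ yields a geometrically convergent bound, following the template of Lemma \ref{lem:ash}. The singular cases $j=0,1$ give rise to Newtonian-type kernels whose $L^2$-mapping properties are standard on the $C^2$-regular boundary $\partial D$ by classical layer-potential theory, and these low-order terms match (up to the bounded multipliers $\lambda/(\lambda+2\mu)$ and $\mu/(\lambda+2\mu)$) the corresponding terms of the scalar expansion. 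Combining these estimates yields the uniform boundedness of $\bS_{\partial D,j}^{p}$ and $\bK_{\partial D,j}^{p,*}$ in $j$ and the convergence of \eqref{eq:lsa} in the claimed operator norms.
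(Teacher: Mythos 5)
Your proposal is correct and follows essentially the route the paper intends: Lemma \ref{lem:kerss} reduces both operators to the scalar kernel $G^{k_p}\mathbf{I}/(\lambda+2\mu)$, after which termwise Taylor expansion of $G^{k_p}$ in $k_p$, together with the Lam\'e traction split into its $\lambda(\nabla\cdot)\bnu$ and $2\mu\nabla^s$ parts, yields exactly $\bS^{p}_{\partial D,j}$ and $\bK^{p,*}_{\partial D,j}=\frac{\lambda}{\lambda+2\mu}\mathbf{R}_{1,j}+\frac{\mu}{\lambda+2\mu}\mathbf{R}_{2,j}$, with the same boundedness/convergence argument as in Lemma \ref{lem:ash}. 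The only cosmetic point is that $\bK^{\omega,p,*}_{\partial D}$ is already defined as a principal-value integral, so no $\pm\frac{1}{2}\bI$ jump term needs to be ``absorbed'' into the $j=0$ coefficient.
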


\begin{lem}\label{lem:ker1}
If $\bvarphi\in \ker\left(\bS_{\partial D}^{\omega,s}\right)$, $k_p\ll 1$,  $\mu\ll1$ and $\lambda=\mathcal{O}(1)$, then one has that for $\bx\in\partial {D}$,
\[
   {\bK}^{\omega,*}_{\partial D}[\bvarphi](\bx) = \mathbf{R}_{1,0}[\bvarphi] + \mathcal{O}(\mu) + \mathcal{O}(k_p^2),
\]
where $\mathbf{R}_{1,0}$ is defined in \eqref{eq:apk}.
\end{lem}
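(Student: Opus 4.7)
The plan is to reduce the full Lam\'e Neumann--Poincar\'e operator $\bK^{\omega,*}_{\partial D}$ to the pressure-wave NP operator $\bK^{\omega,p,*}_{\partial D}$ whenever the density lies in $\ker(\bS^{\omega,s}_{\partial D})$, and then to read off the desired estimate from the low-frequency expansion in Lemma~\ref{lem:asl} together with the non-dimensional scaling~\eqref{eq:pare}. Since $\bvarphi$ is, by the standing convention from the discussion preceding the lemma, independent of $k_s$, Lemma~\ref{lem:kerss} gives $\bS^\omega_{\partial D}[\bvarphi](\bx) = \bS^{\omega,p}_{\partial D}[\bvarphi](\bx) = \frac{1}{\lambda+2\mu}S^{k_p}_{\partial D}[\bvarphi](\bx)$ for $\bx\in\mathbb{R}^3\setminus D$; i.e.\ the vector single-layer is governed by the scalar kernel $\delta_{ij}G^{k_p}/(\lambda+2\mu)$ applied componentwise to $\bvarphi$. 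Taking the Lam\'e conormal trace of this simplified representation from each side of $\partial D$ and using the jump formula~\eqref{eq:jump} identifies $\bK^{\omega,*}_{\partial D}[\bvarphi] = \bK^{\omega,p,*}_{\partial D}[\bvarphi]$, where $\bK^{\omega,p,*}_{\partial D}$ is the operator introduced just before Lemma~\ref{lem:asl}.

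With this reduction I would then invoke Lemma~\ref{lem:asl}. The prefactor $(j-1)$ in~\eqref{eq:apk} kills the $k_p^1$-contribution, giving $\bK^{\omega,p,*}_{\partial D}[\bvarphi] = \bK^{p,*}_{\partial D,0}[\bvarphi] + \mathcal{O}(k_p^2)$ with $\bK^{p,*}_{\partial D,0} = \tfrac{\lambda}{\lambda+2\mu}\mathbf{R}_{1,0} + \tfrac{\mu}{\lambda+2\mu}\mathbf{R}_{2,0}$. The non-dimensionalization~\eqref{eq:pare} forces $\lambda+2\mu\equiv 1$, so the prefactors become just $\lambda$ and $\mu$. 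Writing $\lambda = 1-2\mu = 1+\mathcal{O}(\mu)$ and using the uniform $L^2$-boundedness of $\mathbf{R}_{1,0}$ and $\mathbf{R}_{2,0}$ asserted in Lemma~\ref{lem:asl} collapses the expression to $\mathbf{R}_{1,0}[\bvarphi] + \mathcal{O}(\mu)$, and combining with the $\mathcal{O}(k_p^2)$ remainder yields the stated estimate.

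The main obstacle is the first reduction, i.e.\ the identification $\bK^{\omega,*}_{\partial D}[\bvarphi] = \bK^{\omega,p,*}_{\partial D}[\bvarphi]$ on $\ker(\bS^{\omega,s}_{\partial D})$. This requires the kernel identity for $\bvarphi$ to hold globally (on both sides of $\partial D$, so that the two conormal traces entering the symmetric-average definition of the NP operator match under the simplified kernel) and to hold for arbitrary values of $k_s$ (so that the substitution $k_s\mapsto k_p$ inside the proof of Lemma~\ref{lem:kerss} is legitimate, turning the $s$-part kernel relation into the scalar Helmholtz representation). Both properties are precisely what the remark preceding this lemma verifies for the canonical kernel element $\bnu$ via Lemma~\ref{lem:ul} applied on each connected component of $\mathbb{R}^3\setminus\partial D$; in any concrete application of the lemma one has to verify the same for the density at hand, after which the remainder of the argument is a mechanical bookkeeping of orders under the scalings~\eqref{eq:paas}.
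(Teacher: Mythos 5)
Your reduction has two genuine gaps, and the second one is fatal for the lemma in the generality in which the paper needs it. First, you import the hypothesis that $\bvarphi$ is independent of $k_s$ (so that Lemma \ref{lem:kerss} applies) and that the $s$-part potential vanishes on \emph{both} sides of $\partial D$. Neither is part of the statement: $\ker\bigl(\bS_{\partial D}^{\omega,s}\bigr)$ is defined by vanishing in $\mathbb{R}^3\setminus\overline{D}$ only (the argument in the remark via Lemma \ref{lem:ul} gives nothing inside $D$), and the remark immediately following the lemma stresses that the result must hold for densities that \emph{may} depend on $k_s$. This generality is not cosmetic: in the proof of Theorem \ref{thm:wr1} the lemma is applied to $\bvarphi_0\in\ker\bigl(\bS_{\partial D}^{\omega,s}\bigr)$ \emph{before} it is deduced that $\bvarphi_0=c_0\bnu$; assuming normality or $k_s$-independence at that stage would make the argument circular. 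Second, and more seriously, the identification $\bK^{\omega,*}_{\partial D}[\bvarphi]=\bK^{\omega,p,*}_{\partial D}[\bvarphi]$ does not follow from the jump formula. Even granting the exterior identity $\bS_{\partial D}^{\omega}[\bvarphi]=\tfrac{1}{\lambda+2\mu}S^{k_p}_{\partial D}[\bvarphi]$, matching exterior traction traces gives
\begin{equation*}
\tfrac12\bvarphi+\bK^{\omega,*}_{\partial D}[\bvarphi]
=\partial_{\bnu}\Bigl(\tfrac{1}{\lambda+2\mu}S^{k_p}_{\partial D}[\bvarphi]\Bigr)\Big|_{+}
=\tfrac{1}{2(\lambda+2\mu)}\bigl[(\lambda+\mu)\,\bnu(\bnu\cdot\bvarphi)+\mu\,\bvarphi\bigr]+\bK^{\omega,p,*}_{\partial D}[\bvarphi],
\end{equation*}
because the jump constant of the Lam\'e traction of a componentwise scalar single layer is $\tfrac{1}{2(\lambda+2\mu)}[(\lambda+\mu)\bnu\bnu^{t}+\mu I]$, not $\tfrac12 I$. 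Hence $\bK^{\omega,*}_{\partial D}[\bvarphi]-\bK^{\omega,p,*}_{\partial D}[\bvarphi]=-\tfrac{\lambda+\mu}{2(\lambda+2\mu)}\bigl(\bvarphi-\bnu(\bnu\cdot\bvarphi)\bigr)$, an $\mathcal{O}(1)$ tangential discrepancy. It vanishes exactly only when $\bvarphi$ is parallel to $\bnu$ (which is why your route reproduces the known eigenvalue $\chi_1$ for the ball), so your chain of equalities does not establish the estimate for a general kernel element.

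The paper's proof is structured precisely to avoid both issues: it never passes to $\bK^{\omega,p,*}_{\partial D}$. It works directly with the $p$-kernel $-\tfrac{1}{\omega^2}\nabla\nabla G^{k_p}$ and the algebraic splitting \eqref{eq:redt} of the traction: the $\mu\,\bnu\times(\nabla\times\,\cdot\,)$ piece vanishes since the kernel is curl-free; the $2\mu\,\nabla(\cdot)\bnu$ piece is $\mathcal{O}(\mu)$ after subtracting the static kernel by Lemma \ref{lem:0} (which holds for any element of $\ker\bigl(\bS_{\partial D}^{\omega,s}\bigr)$, $k_s$-dependent or not); and the $\lambda(\nabla\cdot\,\cdot)\bnu$ piece is evaluated exactly via $\triangle G^{k_p}=-k_p^2G^{k_p}$, yielding $\mathbf{R}_{1,0}[\bvarphi]+\mathcal{O}(k_p^2)$ up to the factor $\lambda/(\lambda+2\mu)=1+\mathcal{O}(\mu)$. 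If you want to salvage your approach you must either restrict to normal densities (defeating the lemma's role in Theorem \ref{thm:wr1}) or track the non-standard jump terms explicitly, at which point you are effectively redoing the paper's kernel computation.
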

\begin{proof}
From the definition of the fundamental solution in \eqref{eq:ef} and the fact $\bvarphi\in \ker\left(\bS_{\partial D}^{\omega,s}\right)$, we only need to deal with the kernel function $-\frac{1}{\omega^2} \nabla \nabla G^{{k}_p}$. Moreover, the traction operator $\partial_{{\bnu}}$ defined in \eqref{eq:trac} can also be written as 
\begin{equation}\label{eq:redt}
\partial_{{\bnu}} \bw = 2\mu \nabla\bw\cdot \bnu + \lambda (\nabla\cdot\bu) \bnu +\mu \bnu\times (\nabla\times \bw).
\end{equation}
From Lemma \ref{lem:0}, has that 
\begin{equation}\label{eq:in3}
\begin{split}
 &\int_{\partial D} 2\mu \nabla \left(-\frac{1}{\omega^2} \nabla \nabla G^{{k}_p}(\bx-\by) \bvarphi(\by)\right)\cdot\bnu ds(\by) \\ 
= &\int_{\partial D} 2\mu \nabla \left(-\frac{1}{\omega^2} \nabla \nabla \left(G^{{k}_p} - G^0 \right)(\bx-\by) \bvarphi(\by)\right)\cdot\bnu ds(\by) \\
=& \mathcal{O}(\mu),
\end{split}
\end{equation}
where the last identity follows from the Lemma \ref{lem:asl}. There also holds that 
\begin{equation}\label{eq:in4}
\begin{split}
 & \lambda \bnu\int_{\partial D} \nabla \cdot \left(-\frac{1}{\omega^2} \nabla \nabla G^{{k}_p}(\bx-\by) \bvarphi(\by)\right) ds(\by) \\
 = & \lambda \bnu\int_{\partial D} -\frac{1}{ \omega^2} \triangle \nabla G^{{k}_p}(\bx-\by)\cdot \bvarphi(\by)ds(\by)  \\
  = &  \bnu\int_{\partial D}  \nabla G^{{k}_p}(\bx-\by)\cdot \bvarphi(\by)ds(\by)  \\
   = & \mathbf{R}_{1,0}[ \bvarphi(\by)]+  \mathcal{O}(k_p^2).
\end{split}
\end{equation}
where $\mathbf{R}_{1,0}$ is defined in \eqref{eq:apk}. Moreover, since $\nabla\times\nabla =0$, one finally concludes that from \eqref{eq:redt}, \eqref{eq:in3} and \eqref{eq:in4} 
\[
 {\bK}^{\omega,*}_{\partial D}[\bvarphi](\bx) = \mathbf{R}_{1,0}[\bvarphi] + \mathcal{O}(\mu) + \mathcal{O}(k_p^2).
\]
The proof is completed.
\end{proof}

\begin{rem}
Lemma \ref{lem:ker1} holds for any $\bvarphi\in \ker\left(\bS_{\partial D}^{\omega,s}\right)$, which could depend on $k_s$.
\end{rem}

{For the later convenience, we introduce an important subspace of  {$L^2(\partial D)$:}}
\begin{equation}\label{eq:l0}
 L^2_0(\partial D)=\{\varphi\in L^2(\partial D) : \int_{\partial D} \varphi ds=0 \},
\end{equation}
and the following results that can be found in \cite{HK07:book}.
\begin{lem}\label{lem:ks1}
Let $\xi$ be a real number. The operator $\xi-K^{*}_{\partial D,0}$ is invertible on $L^2_0(\partial D)$ if $|\xi|\geq 1/2$, 
where 
$K^{*}_{\partial D,0}$ is given in \eqref{eq:dekk}. Furthermore, 
the kernel of the operator $\left(-\frac{I}{2} + K^{*}_{\partial D,0}\right) $, 
{restricted in the space $L^2(\partial D)$,} 
is one dimensional, and 
\[
 \ker \left(-\frac{I}{2} + K^{*}_{\partial D,0}\right) =\spn\{S_{\partial D,0}^{-1}[1]\},
\]
where the operator $S_{\partial D,0}^{-1}$ is the inverse of the operator $S_{\partial D,0}$ defined in \eqref{eq:s_h}.
\end{lem}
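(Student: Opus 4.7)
The plan is to derive both assertions from the Plemelj symmetrization principle combined with a classical energy analysis of the harmonic extensions of $S_{\partial D,0}[\varphi]$, and then to read off the kernel at $\xi=1/2$ from the interior Neumann problem.

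First I would record the Calderón-type identity $S_{\partial D,0} K^{*}_{\partial D,0} = K_{\partial D,0} S_{\partial D,0}$, which follows from Fubini applied to the kernels in \eqref{eq:dekk}, together with the fact that $S_{\partial D,0}:L^2(\partial D)\to H^1(\partial D)$ is an isomorphism in three dimensions, and is self-adjoint and negative as an operator on $L^2(\partial D)$. This makes $-S_{\partial D,0}$ the Gram operator of an inner product $\la\varphi,\psi\ra_{*}:=-\la S_{\partial D,0}[\varphi],\psi\ra_{L^2(\partial D)}$ on a space isomorphic to $H^{-1/2}(\partial D)$, and the Calderón identity shows that $K^{*}_{\partial D,0}$ is self-adjoint and compact there. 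The spectral theorem then yields a real point spectrum accumulating only at $0$.

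Next I would localize this spectrum inside $[-1/2,1/2]$. Given $(\xi-K^{*}_{\partial D,0})[\varphi]=0$ with $\varphi\neq 0$, set $u:=S_{\partial D,0}[\varphi]$, which is harmonic on $\RR^3\setminus\partial D$ and decays at infinity. The jump formula \eqref{eq:ju_he} at $k=0$ gives $\p_\nu u|_{\pm}=(\xi\pm\tfrac12)\varphi$, and integrating by parts the Dirichlet energies of $u$ over $D$ and $\RR^3\setminus\overline{D}$ yields
\begin{equation*}
(\tfrac12-\xi)\int_{\partial D}\varphi\,u\,ds=\int_D|\nabla u|^2,\qquad (\tfrac12+\xi)\int_{\partial D}\varphi\,u\,ds=\int_{\RR^3\setminus\overline{D}}|\nabla u|^2,
\end{equation*}
which forces $\xi\in[-1/2,1/2]$ by positivity. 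At the endpoint $\xi=1/2$, $\p_\nu u|_{-}=0$ together with the harmonicity of $u$ in $D$ gives $u\equiv c$ in $D$, and then continuity across $\partial D$ plus uniqueness of the exterior Dirichlet problem identify $u$ uniquely in terms of $c$; inverting $S_{\partial D,0}$ produces the eigenspace $\spn\{S_{\partial D,0}^{-1}[1]\}$. At $\xi=-1/2$, $\p_\nu u|_{+}=0$ with decay at infinity forces $u\equiv 0$ outside $D$, and then $u|_{-}=0$ plus interior Dirichlet uniqueness gives $\varphi=0$.

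Finally, the invariance of $L^2_0(\partial D)$ under $K^{*}_{\partial D,0}$ follows from $K_{\partial D,0}[1]=-1/2$ on $\partial D$ (a standard consequence of the double-layer jump for the harmonic indicator function of $D$), which gives $\int_{\partial D}K^{*}_{\partial D,0}[\varphi]\,ds=-\tfrac12\int_{\partial D}\varphi\,ds$. For $|\xi|>1/2$ the spectral localization already gives invertibility on all of $L^2(\partial D)$, and in particular on $L^2_0(\partial D)$. For $\xi=\pm 1/2$, the kernel analysis above combined with $L^2_0$-invariance reduces the kernel on $L^2_0(\partial D)$ to $\{0\}$, because $S_{\partial D,0}^{-1}[1]$ has nonzero mean (its integral equals, up to sign, the electrostatic capacity of $D$, which is strictly positive); compactness of $K^{*}_{\partial D,0}$ and the Fredholm alternative then upgrade injectivity to invertibility. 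The main obstacle in this plan is the endpoint analysis: one must keep careful track of conventions to ensure that the symmetrizing inner product is positive and that $S_{\partial D,0}^{-1}[1]\notin L^2_0(\partial D)$, since otherwise the Fredholm step at $\xi=1/2$ on $L^2_0$ would fail.
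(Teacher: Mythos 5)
The paper gives no proof of this lemma at all --- it simply quotes the result from the cited monograph \cite{HK07:book} --- and your proposal essentially reconstructs the standard argument found there: Plemelj's symmetrization $S_{\partial D,0}K^{*}_{\partial D,0}=K_{\partial D,0}S_{\partial D,0}$, which makes $K^{*}_{\partial D,0}$ compact and self-adjoint for the inner product generated by $-S_{\partial D,0}$; Kellogg-type energy identities for $u=S_{\partial D,0}[\varphi]$ to confine the eigenvalues to $[-1/2,1/2]$; the endpoint analysis via interior/exterior uniqueness to get $\ker\left(-\frac{I}{2}+K^{*}_{\partial D,0}\right)=\spn\{S_{\partial D,0}^{-1}[1]\}$ and to exclude $\xi=-1/2$; and invariance of $L^2_0(\partial D)$ plus the fact that $\int_{\partial D}S_{\partial D,0}^{-1}[1]\,ds\neq 0$ (capacity) together with the Fredholm alternative. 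The logic is sound, but three convention-level points need repair. First, the symmetrization identity is not a consequence of Fubini applied to the kernels in \eqref{eq:dekk}; it requires Green's identities/the jump relations (or simply a citation). Second, with the paper's choice $G^{0}(\bx)=-\frac{1}{4\pi|\bx|}$ one has $\int_{\partial D}\varphi\, S_{\partial D,0}[\varphi]\,ds\le 0$ and $\partial_\bnu u|_{\pm}=(\pm\tfrac12 I+K^{*}_{\partial D,0})[\varphi]$, so the correct identities are $\int_D|\nabla u|^2=(\xi-\tfrac12)\int_{\partial D}u\varphi\,ds$ and $\int_{\RR^3\setminus\overline D}|\nabla u|^2=-(\xi+\tfrac12)\int_{\partial D}u\varphi\,ds$; as you wrote them (with $\int_{\partial D}\varphi u\,ds$ on the left) they contradict your own statement that $S_{\partial D,0}$ is negative --- they are the identities for the quantity $-\langle S_{\partial D,0}\varphi,\varphi\rangle$. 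Third, with the kernel in \eqref{eq:dekk} one gets $K_{\partial D,0}[1]=+\tfrac12$, not $-\tfrac12$, hence $\int_{\partial D}K^{*}_{\partial D,0}[\varphi]\,ds=\tfrac12\int_{\partial D}\varphi\,ds$; the $L^2_0$-invariance you need is unaffected. None of these changes the structure of the argument, and you rightly flagged the convention-tracking as the delicate point yourself.
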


\begin{lem}\label{lem:k}
{All $f\in L^2(\partial D)$ satisfying $\left(-\frac{I}{2} +  K_{\partial D,0}\right)f=0$, with $K_{\partial D,0}$ defined in \eqref{eq:dekk}, are constant.}
\end{lem}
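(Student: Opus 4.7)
The plan is to combine a Fredholm-dimension argument with a direct verification that constants lie in the kernel of $-\tfrac{1}{2}I + K_{\partial D,0}$, which together will pin down the kernel exactly.

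First I would recall that $K_{\partial D,0}$ is a compact operator on $L^2(\partial D)$ (standard for the double-layer-type kernel with a $C^2$ boundary; this is cited in the mapping-property references $\cite{CK, Ned}$), so $-\tfrac{1}{2}I + K_{\partial D,0}$ is a Fredholm operator of index zero. Since $K_{\partial D,0}$ and $K^{*}_{\partial D,0}$ are $L^2$-adjoints of one another, the Fredholm alternative gives
\[
\dim\ker\bigl(-\tfrac{1}{2}I + K_{\partial D,0}\bigr) \;=\; \dim\ker\bigl(-\tfrac{1}{2}I + K^{*}_{\partial D,0}\bigr).
\]
By Lemma \ref{lem:ks1} the right-hand side equals $1$. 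Hence it suffices to exhibit \emph{one} nonzero element of $\ker(-\tfrac{1}{2}I + K_{\partial D,0})$ which is constant, and then \emph{every} solution must be a scalar multiple of it.

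Next I would verify that $f \equiv 1$ lies in this kernel by a direct divergence-theorem computation. Using the explicit 3D kernel from \eqref{eq:dekk} and the identity $\nabla_\by |\bx-\by|^{-1} = (\bx-\by)/|\bx-\by|^3$, the divergence theorem applied to $D$ (noting $\Delta_\by |\bx-\by|^{-1} = -4\pi\,\delta(\bx-\by)$) yields
\[
\int_{\partial D} \frac{\langle \by-\bx, \bnu_\by\rangle}{4\pi|\bx-\by|^3}\,ds(\by)
\;=\;
\begin{cases}
0, & \bx\in \mathbb{R}^3\setminus\overline{D},\\
1, & \bx \in D,
\end{cases}
\]
and the classical jump relation for the double layer then gives the boundary value $\tfrac{1}{2}$ for $\bx\in\partial D$. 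Thus $K_{\partial D,0}[1](\bx) = \tfrac{1}{2}$ on $\partial D$, i.e., $(-\tfrac{1}{2}I + K_{\partial D,0})[1]=0$.

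Combining the two steps, the kernel is exactly the one-dimensional span of the constant function $1$, which is the claim. The only slightly delicate point is the sign bookkeeping for the double-layer jump that accompanies the convention $G^0(\bx)=-1/(4\pi|\bx|)$ used in \eqref{eq:fu_he}; once that is verified consistently, the argument is routine, and no deeper analysis is required beyond Fredholm theory and the divergence theorem.
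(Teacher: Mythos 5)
Your argument is correct and is essentially the approach behind the paper's treatment: the paper does not prove Lemma \ref{lem:k} at all but cites \cite{HK07:book}, where the same combination of the Fredholm alternative (the operator $K_{\partial D,0}$ is compact, so $\dim\ker\bigl(-\tfrac12 I+K_{\partial D,0}\bigr)=\dim\ker\bigl(-\tfrac12 I+K^{*}_{\partial D,0}\bigr)=1$ by Lemma \ref{lem:ks1}) and the Gauss identity $K_{\partial D,0}[1]=\tfrac12$ on $\partial D$ yields that the kernel is exactly the constants. Your sign bookkeeping is consistent with the kernel as written in \eqref{eq:dekk}, so the verification that constants lie in the kernel goes through as you describe.
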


%

\section{Minnaert resonances in three dimensions}

In this section, we show the Minnaert resonances for the system \eqref{eq:mond} in $\mathbb{R}^3$. 
{We first prove
that the weak resonance always} occurs provided that the parameters are properly chosen. 
Then with a proper choice of the geometry of the domain $D$, we further show that 
enhanced or even strong resonances can occur.

\begin{thm}\label{thm:wr1}
Consider the system \eqref{eq:mond} in three dimensions. If the parameters are chosen according to \eqref{eq:as1}--\eqref{eq:as2} (or equivalently \eqref{eq:paas}), {then weak Minnaert resonance occurs.}
\end{thm}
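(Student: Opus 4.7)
The plan is to exhibit an explicit $\Phi\in\mathcal{H}$ with $\|\Phi\|_{\mathcal{H}}=1$ such that $\|\Acal(k,\delta)[\Phi]\|_{\mathcal{H}}=o(1)$. The $1/k^2$ factor in the $(1,1)$-block forces $\varphi_b$ to lie near $\ker(-\tfrac{I}{2}+K^{*}_{\partial D,0})=\spn\{\psi_0\}$ with $\psi_0:=S_{\partial D,0}^{-1}[1]$ (Lemma \ref{lem:ks1}); the remark after Lemma \ref{lem:kerss} places $\bnu\in\ker(\bS^{\omega,s}_{\partial D})$ independently of $k_s$, and combined with Lemma \ref{lem:ker1} plus the identity $K_{\partial D,0}[1]=\tfrac{1}{2}$ on $\partial D$ (which follows from Lemma \ref{lem:k}) one sees that $\bvarphi_e=\beta\bnu$ makes the $(2,2)$-block contribution automatically small. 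I therefore take the ansatz
\[
\Phi=\bigl(\alpha\psi_0+k^{2}\varphi_b^{(2)},\,\beta\bnu\bigr),
\]
with scalars $\alpha,\beta$ and correction $\varphi_b^{(2)}\in L^{2}_{0}(\partial D)$ to be determined.

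The second row of $\Acal(k,\delta)[\Phi]$ is straightforward: Lemma \ref{lem:ker1} gives $(\tfrac{I}{2}+\bK^{k\tau,*}_{\partial D})[\beta\bnu]=\beta\mathbf{R}_{1,0}[\bnu]+O(\mu+k_p^2)$, and a direct computation of $\mathbf{R}_{1,0}[\bnu]$ against the solid-angle identity shows it equals $-\tfrac{\beta}{2}\bnu$, so this block contributes $O(\mu+k_p^2)$. The term $\delta\tau^{2}\bnu S^{k}_{\partial D}[\varphi_b]$ equals $\delta\tau^{2}\alpha\bnu+O(\delta k)$ thanks to $S_{\partial D,0}[\psi_0]\equiv 1$ in $\overline{D}$. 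So the second row has norm $O(\delta+\mu+k_p^2)=o(1)$.

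For the first row, Lemma \ref{lem:ash} together with the fact that $K^{*}_{\partial D,1}\equiv 0$ (the $(j-1)$ prefactor vanishes at $j=1$) yields
\[
\tfrac{1}{k^{2}}\bigl(-\tfrac{I}{2}+K^{k,*}_{\partial D}\bigr)[\alpha\psi_0+k^{2}\varphi_b^{(2)}]=\alpha K^{*}_{\partial D,2}[\psi_0]+\bigl(-\tfrac{I}{2}+K^{*}_{\partial D,0}\bigr)[\varphi_b^{(2)}]+O(k),
\]
and Lemma \ref{lem:kerss} gives $\bnu\cdot\bS^{k\tau}_{\partial D}[\beta\bnu]=\tfrac{\beta}{\lambda+2\mu}\bnu\cdot S_{\partial D,0}[\bnu]+O(k_p)$. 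Since $-\tfrac{I}{2}+K^{*}_{\partial D,0}$ is Fredholm of index zero with one-dimensional kernel $\spn\{\psi_0\}$ and cokernel equal to the constants (Lemmas \ref{lem:ks1} and \ref{lem:k}), it restricts to an isomorphism from $L^{2}_{0}(\partial D)$ onto itself. Choose $\varphi_b^{(2)}\in L^{2}_{0}$ so that $(-\tfrac{I}{2}+K^{*}_{\partial D,0})[\varphi_b^{(2)}]$ exactly cancels the $L^{2}_{0}$-component of the residual data; the first row then reduces to the single constant $c=\alpha c_1-\tfrac{\beta}{\lambda+2\mu}c_2$ plus $O(k+k_p)$, where $c_i$ are the boundary averages of $K^{*}_{\partial D,2}[\psi_0]$ and $\bnu\cdot S_{\partial D,0}[\bnu]$, respectively.

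The crux is that $c_1=c_2=-|D|/|\partial D|$, so $c=0$ reduces to the single relation $\alpha=\beta/(\lambda+2\mu)$. For $c_1$, write $(\bx-\by)\cdot\bnu_\bx/|\bx-\by|=\partial_{\bnu_\bx}|\bx-\by|$, apply the divergence theorem in $\bx$ to produce $\int_D 2/|\bx-\by|\,d\bx$, and recognize this via $S_{\partial D,0}[\psi_0]\equiv 1$ in $\overline{D}$ as $-|D|$. For $c_2$, compute $\nabla\cdot S_{\partial D,0}[\bnu]=-K_{\partial D,0}[1]\equiv -1$ in $D$ (using $\nabla_\bx G^{0}=-\nabla_\by G^{0}$ and the solid-angle identity), then apply the divergence theorem. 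Fixing $\alpha=\beta/(\lambda+2\mu)$ and rescaling $\beta=\Theta(1)$ to achieve $\|\Phi\|_{\mathcal{H}}=1$ produces $\|\Acal(k,\delta)[\Phi]\|_{\mathcal{H}}=O(k+k_p+\delta+\mu)=o(1)$. The main obstacle is precisely the coincidence $c_1=c_2$: without it the single free ratio $\alpha/\beta$ could not simultaneously kill the constant residual, and only a degenerate choice would satisfy the weak resonance condition. Both computations are reflections of the same identity $S_{\partial D,0}[\psi_0]\equiv 1$ in $\overline{D}$ combined with the divergence theorem.
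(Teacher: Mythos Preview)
Your argument is correct and follows essentially the same construction as the paper's: take $\varphi_b$ in the one-dimensional kernel $\spn\{S_{\partial D,0}^{-1}[1]\}$, take $\bvarphi_e=\beta\bnu\in\ker(\bS^{\omega,s}_{\partial D})$, add a correction $\varphi_b^{(2)}$ determined by the solvability condition for $-\tfrac{I}{2}+K^{*}_{\partial D,0}$, and read off the remainder. The paper carries the expansion two orders further (adding $k\Phi_1,\dots,k^4\Phi_4$) to obtain the sharper bound $\|\Acal(k,\delta)[\Phi]\|_{\mathcal{H}}=\mathcal{O}(\delta)+\mathcal{O}(\mu)+\mathcal{O}(k^2)$ in \eqref{eq:esap}, whereas your truncation gives $\mathcal{O}(k)+\mathcal{O}(\delta)+\mathcal{O}(\mu)$; both are $o(1)$ under \eqref{eq:paas}, so either suffices for weak resonance.

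One point worth noting: your explicit computation $c_1=c_2=-|D|/|\partial D|$, yielding the domain-independent relation $\alpha=\beta/(\lambda+2\mu)$, is a nice addition that the paper does not carry out in the general proof (it leaves $c_0$ as the ratio in \eqref{eq:c0} and only evaluates it for the unit ball in the subsequent remark). Your identities $\int_{\partial D}K^{*}_{\partial D,2}[\psi_0]\,ds=-|D|$ and $\int_{\partial D}\bnu\cdot S_{\partial D,0}[\bnu]\,ds=-|D|$ are both correct and follow cleanly from the divergence theorem together with $S_{\partial D,0}[\psi_0]\equiv 1$ in $\overline{D}$, exactly as you indicate.
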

\begin{proof}
The proof is proceeded by construction. By the definition of the weak resonance in \eqref{eq:conrew} for the system \eqref{eq:mond}, we construct in what follows a density function $\Phi\in\mathcal{H}$ with $\| \Phi\|_{\mathcal{H}}=1$ such that the condition \eqref{eq:conrew} is fulfilled. 

Since we consider the low-frequency resonance, i.e. $k\ll 1$, then the density function $\Phi\in\mathcal{H}$ should satisfy the following asymptotic expansion
\[
\Phi=\Phi_0 + k \Phi_1 + k^2 \Phi_2 + \cdots, 
\]
where
\[
 \Phi_j=
  \left(
    \begin{array}{c}
      \varphi_j \\
     \bvarphi_j \\
    \end{array}
  \right), \mm{~~j=0, 1, 2, \cdots}
\]

As we discussed earlier, the resonance is mainly caused by the p-waver in our study. Therefore, we choose 
\[
 \bvarphi_j \in \ker\left(\bS_{\partial D}^{\omega,s}\right), \mm{~~j=0, 1, 2, \cdots}.
\]
\rr{From the assumption that $\delta\ll1$ and the operator $S^k_{\partial D}$ is bounded},
\mm{we can derive from the definition of the operator $\Acal(k,\delta)$ in \eqref{eq:defA} and Lemmas \ref{lem:ash}-\ref{lem:ker1} that
}
\begin{equation}\label{eq:soe1}
\begin{split}
 \Acal(k,\delta)[\Phi] = & \frac{1}{k^2} \left(
    \begin{array}{c}
      \left(-\frac{I}{2} + K^{*}_{\partial D,0}\right) [\varphi_0] \\
     0 \\
    \end{array}
  \right)
 +   \frac{1}{k} \left(
    \begin{array}{c}
      \left(-\frac{I}{2} + K^{*}_{\partial D,0}\right) [\varphi_1] \\
     0 \\
    \end{array}
  \right) +  \\
  & 
 \left(
    \begin{array}{c}
       K^{*}_{\partial D,2}[\varphi_0] -  \bnu\cdot {\bS}_{\partial D}^{k\tau}[\bvarphi_0] + \left(-\frac{I}{2} + K^{*}_{\partial D,0}\right) [\varphi_2]\\
     \left(\frac{I}{2} +  {\bK}^{k\tau,*}_{\partial D}\right)[\bvarphi_0] \\
    \end{array}
  \right) 
+\mathcal{O}(k)+\rr{\mathcal{O}(\delta)}.
\end{split} 
\end{equation}
Since $k\ll1$, the first two terms in \eqref{eq:soe1} should vanish. Thus one can conclude from Lemma \ref{lem:ks1} that
\begin{equation}\label{eq:v0v1}
 \varphi_0, \varphi_1 \in \ker \left(-\frac{I}{2} + K^{*}_{\partial D,0}\right) =\spn\{S_{\partial D,0}^{-1}[1]\}.
\end{equation}
Next we deal with the third term in \eqref{eq:soe1}. Since $\bvarphi_0 \in \ker\left(\bS_{\partial D}^{\omega,s}\right)$, one has that form Lemma \ref{lem:ker1}
\[
 \left(\frac{I}{2} +  {\bK}^{k\tau,*}_{\partial D}\right)[\bvarphi_0]  = \frac{1}{2} \bvarphi_0 + \mathbf{R}_{1,0}[\bvarphi_0] + \mathcal{O}(\mu) + \mathcal{O}(k_p^2),
\]
where
\begin{equation}\label{eq:r10}
 \mathbf{R}_{1,0}[\bvarphi_0] =\bnu_{\bx}\int_{\partial D} \frac{\langle\bx-\by,\bvarphi_0\rangle}{4\pi |\bx-\by|^3}ds(\by),
\end{equation}
and is bounded from $L^2(\partial D)^3$ to $L^2(\partial D)^3$. Therefore the leading term $ \frac{1}{2} \bvarphi_0 + \mathbf{R}_{1,0}[\bvarphi_0] $ should vanish.  From \eqref{eq:r10}, $\mathbf{R}_{1,0}[\bvarphi_0]$ contains only the normal component, therefore the function $\bvarphi_0$ should also contain only the normal component, namely 
\[
 \bvarphi_0=\varphi \bnu, 
\]
for some $\varphi\in L^2(\partial D)$. Thus 
\[
\frac{1}{2}\bvarphi_0 + \mathbf{R}_{1,0}[\bvarphi_0] =\bnu\left( \frac{1}{2} \varphi - K_{\partial D,0} [\varphi]\right)
\]
and $\varphi$ should be a constant thanks to Lemma \ref{lem:k}. Hence one finally obtains that 
\[
 \bvarphi_0=c_0 \bnu
\]
for some constant $c_0$, which will be further determined later. Since $\bvarphi_0$ derived in the last equation does not depend on $k_s$, one has the following expansion from Lemma \ref{lem:asl}
\[
{\bS}_{\partial D}^{k\tau}[\bvarphi_0] = \bS_{\partial D}^{\omega,p} [\bvarphi_0]=  \sum_{j=0}^{+\infty} k_p^j  \bS_{\partial D,j}^{p} [\bvarphi_0], 
\]
and 
\[
 {\bK}^{k\tau,*}_{\partial D}[\bvarphi_0] = \bK_{\partial D}^{\omega,p,*}[\bvarphi_0]  =\sum_{j=0}^{+\infty} k_p^j \bK_{\partial D,j}^{p,*} [\bvarphi_0].
\]
We proceed to deal with the first component of the third term in \eqref{eq:soe1} by solving the following equation
\begin{equation}\label{eq:var2}
  \left(-\frac{I}{2} + K^{*}_{\partial D,0}\right) [\varphi_2]=  c_0\bnu\cdot {\bS}_{\partial D,0}^{p}[\bnu] - K^{*}_{\partial D,2} [\varphi_0]  .
\end{equation}
Lemma \ref{lem:ks1} shows that the operator $\left(-\frac{I}{2} + K^{*}_{\partial D}\right)$ is invertible on  $L^2_0(\partial D)$, 
thus $c_0$ should be chosen as 
\begin{equation}\label{eq:c0}
 c_0=\frac{\int_{\partial D}  K^{*}_{\partial D,2} [\varphi_0](\bx)  ds(\bx) }{\int_{\partial D} \bnu\cdot {\bS}_{\partial D,0}^{p}[\bnu](\bx)ds(\bx) }
\end{equation}
such that the equation \eqref{eq:var2} is solvable. Thus 
\[
 \varphi_2 =   \left(-\frac{I}{2} + K^{*}_{\partial D,0}\right)^{-1} \left[c_0\bnu\cdot {\bS}_{\partial D,0}^{p}[\bnu] - K^{*}_{\partial D,2} [\varphi_0] \right] .
\]

 Following the same process above, one can construct
\begin{equation}\label{eq:phi}
 \Phi=
  \left(
    \begin{array}{c}
      \varphi_0 \\
     c_0 \bnu \\
    \end{array}
  \right) + 
k\left(
    \begin{array}{c}
      \varphi_1  \\
     c_1 \bnu \\
    \end{array}
  \right) + 
 k^2\left(
    \begin{array}{c}
      \varphi_2  \\
    c_2 \bnu\\
    \end{array}
  \right)
+ 
 k^3\left(
    \begin{array}{c}
      \varphi_3  \\
     0\\
    \end{array}
  \right) + 
 k^4\left(
    \begin{array}{c}
      \varphi_4  \\
     0\\
    \end{array}
  \right),
\end{equation}
where $\varphi_0, \varphi_1$ are given in \eqref{eq:v0v1}, $c_0$ is given \eqref{eq:c0} and
\[
 c_1=\frac{\int_{\partial D}  K^{*}_{\partial D,2} [\varphi_1](\bx) + K^{*}_{\partial D,3} [\varphi_0](\bx) - (\tau/c_p) \bnu\cdot {\bS}_{\partial D,1}^{p}[c_0\bnu](\bx) d\bx    }{\int_{\partial D} \bnu\cdot {\bS}_{\partial D,0}^{p}[\bnu](\bx)d\bx },
\]
\[
 \varphi_3 =   \left(-\frac{I}{2} + K^{*}_{\partial D,0}\right)^{-1} \left[c_1\bnu\cdot {\bS}_{\partial D,0}^{p}[\bnu] - K^{*}_{\partial D,2} [\varphi_1] +(\tau/c_p) c_0\bnu\cdot {\bS}_{\partial D,1}^{p}[\bnu] - K^{*}_{\partial D,3} [\varphi_0] \right] ,
\]
\[
\begin{split}
 c_2= & \frac{\int_{\partial D}  K^{*}_{\partial D,2} [\varphi_2](\bx) + K^{*}_{\partial D,3} [\varphi_1](\bx) +  K^{*}_{\partial D,4} [\varphi_0](\bx) d\bx    }{\int_{\partial D} \bnu\cdot {\bS}_{\partial D,0}^{p}[\bnu](\bx)d\bx } - \\
   & \frac{\int_{\partial D}  (\tau/c_p) \bnu\cdot {\bS}_{\partial D,1}^{p}[c_1\bnu](\bx) + (\tau/c_p)^2 \bnu\cdot {\bS}_{\partial D,2}^{p}[c_0\bnu](\bx)d\bx    }{\int_{\partial D} \bnu\cdot {\bS}_{\partial D,0}^{p}[\bnu](\bx)d\bx },
\end{split}
\]
\[
\begin{split}
 \varphi_4 =   \left(-\frac{I}{2} + K^{*}_{\partial D,0}\right)^{-1} & \left[  c_2\bnu\cdot {\bS}_{\partial D,0}^{p}[\bnu] -K^{*}_{\partial D,2} [\varphi_2](\bx) - K^{*}_{\partial D,3} [\varphi_1](\bx) + \right. \\
 & \left. (\tau/c_p)\bnu\cdot {\bS}_{\partial D,1}^{p}[c_1\bnu](\bx) -  K^{*}_{\partial D,4} [\varphi_0](\bx) + (\tau/c_p)^2 \bnu\cdot {\bS}_{\partial D,2}^{p}[c_0\bnu](\bx) \right] .
\end{split}
\]
Then one can have that
\begin{equation}\label{eq:esap}
\begin{split}
  \Acal(\omega,\delta)[\Phi]= &\left(
    \begin{array}{c}
      0  \\
     \delta \tau^2 \bnu  + \mu\left(\frac{1}{\lambda+2\mu} (\mathbf{R}_{2,0}-2\mathbf{R}_{1,0})[c_0\bnu]\right) +  k^2(\tau/c_p)^2  \mathbf{R}_{1,2}[c_0\bnu]  \\
    \end{array}
  \right) +\\
&\left(
    \begin{array}{c}
      \mathcal{O}(k^3)  \\
      \mathcal{O}(k^3) + \mathcal{O}(\delta k) + \mathcal{O}(\mu k)  \\
    \end{array}
  \right),
\end{split}
\end{equation}
with $\Phi$ defined in \eqref{eq:phi} and the operator $\mathbf{R}_{i,j}$ defined in Lemma \ref{lem:asl}.
Finally one can conclude that 
\[
 \|\Acal(\omega,\delta)[\Phi](\bx)\|_{\mathcal{H}}=\mathcal{O}(\delta) + \mathcal{O}(\mu) +\mathcal{O}(k^2)\ll 1,
\]
which clearly shows that the weak resonance occurs.
%
\end{proof}

From the proof of Theorem \ref{thm:wr1}, one readily sees that \mm{we can not enhance the resonance
by diminishing the parameter $k$ only}. The parameter $k$ should be chosen in an appropriate way that is correlated to the parameters $\delta$ and $\mu$ in order to achieve enhanced resonance effects. 
\mm{In fact, we have the following results.}
\begin{prop}\label{pro:br1}
Consider \mm{the same} setup as that in Theorem \ref{thm:wr1}. If the following equation is solvable 
\begin{equation}\label{eq:br1}
\delta \tau^2 \bnu  + \mu\left(\frac{1}{\lambda+2\mu} (\mathbf{R}_{2,0}-2\mathbf{R}_{1,0})[c_0\bnu]\right) +  k^2(\tau/c_p)^2  \mathbf{R}_{1,2}[c_0\bnu]  =0,
\end{equation}
where $\mathbf{R}_{2,0}$, $\mathbf{R}_{1,0}$ and $\mathbf{R}_{1,2}$ are defined in Lemma \ref{lem:asl},
then one has that 
\begin{equation}\label{eq:esbr}
  \|\Acal(\omega,\delta)[\Phi](\bx)\|_{\mathcal{H}}=\mathcal{O}(\delta k) + \mathcal{O}(\mu k) +\mathcal{O}(k^3),
\end{equation}
which indicates that the enhanced resonance can be achieved. If \eqref{eq:br1} is solvable, the parameter $k$ should fulfil 
\begin{equation}\label{eq:lk}
k=\sqrt{\mathcal{O}(\delta) + \mathcal{O}(\mu) }.
\end{equation}
\end{prop}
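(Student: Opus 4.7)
The plan is to exploit the asymptotic expansion \eqref{eq:esap} that was already established within the proof of Theorem \ref{thm:wr1}. For the density $\Phi$ constructed in \eqref{eq:phi}, that expansion decomposes $\Acal(\omega,\delta)[\Phi]$ into a leading vector-valued piece whose first component is identically zero and whose second component is
\[
\delta\tau^2\bnu + \mu\Bigl(\tfrac{1}{\lambda+2\mu}(\mathbf{R}_{2,0}-2\mathbf{R}_{1,0})[c_0\bnu]\Bigr) + k^2(\tau/c_p)^2\,\mathbf{R}_{1,2}[c_0\bnu],
\]
together with a remainder of size $\mathcal{O}(k^3)+\mathcal{O}(\delta k)+\mathcal{O}(\mu k)$. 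My first step is simply to observe that the hypothesis \eqref{eq:br1} is exactly the requirement that this leading piece vanishes identically. Once that cancellation is enforced, taking the $\mathcal{H}$-norm of what survives yields
\[
\|\Acal(\omega,\delta)[\Phi]\|_{\mathcal{H}} = \mathcal{O}(\delta k) + \mathcal{O}(\mu k) + \mathcal{O}(k^3),
\]
which is precisely the enhanced bound \eqref{eq:esbr}.

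To establish the scaling \eqref{eq:lk}, I would read off the sizes of the three summands in \eqref{eq:br1} under the standing assumptions \eqref{eq:paas}. Since $\tau$, $\lambda$, $c_p$, and $1/(\lambda+2\mu)$ are all $\mathcal{O}(1)$, and since the integral operators $\mathbf{R}_{1,0}$, $\mathbf{R}_{2,0}$, and $\mathbf{R}_{1,2}$ are uniformly bounded on $L^2(\partial D)^3$ by Lemma \ref{lem:asl}, while $c_0=\mathcal{O}(1)$ by its defining formula \eqref{eq:c0}, the three terms in \eqref{eq:br1} are respectively of orders $\delta$, $\mu$, and $k^2$. For a nontrivial cancellation among these three competing scales to be possible, the $k^2$ contribution must be comparable with the combined $\delta$- and $\mu$-contributions, which forces $k^2 = \mathcal{O}(\delta)+\mathcal{O}(\mu)$ and hence \eqref{eq:lk}.

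The only subtle point I anticipate is the vector-valued nature of the leading piece: the $\delta\tau^2\bnu$ and the $\mathbf{R}_{1,j}[c_0\bnu]$ contributions are purely normal, whereas $\mathbf{R}_{2,0}[c_0\bnu]$ additionally produces a tangential component. Since that tangential part is weighted by the extra factor $\mu/(\lambda+2\mu)$, any apparent direction-mismatch is absorbed into the remainder at one higher order in $\mu$ and does not affect the asymptotics. Beyond this orientation bookkeeping, the argument is a direct comparison of scales; the heavy analytic lifting has already been carried out in Theorem \ref{thm:wr1} and the lemmas of Section 3, so I do not foresee any substantive obstacle in the proof of Proposition \ref{pro:br1} itself. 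The deeper geometric question, namely when \eqref{eq:br1} is actually solvable for a given shape of $D$, is left to the subsequent development of the paper.
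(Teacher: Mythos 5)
Your proposal is correct and follows essentially the same route as the paper: it invokes the expansion \eqref{eq:esap} for the density \eqref{eq:phi}, notes that the hypothesis \eqref{eq:br1} annihilates the leading second-component block so that the surviving remainder gives \eqref{eq:esbr}, and then obtains \eqref{eq:lk} by balancing the three scales $\delta$, $\mu$, $k^2$ using the boundedness of $\mathbf{R}_{1,0}$, $\mathbf{R}_{2,0}$, $\mathbf{R}_{1,2}$ and $\tau,\lambda=\mathcal{O}(1)$. Your side remark about the tangential part of $\mathbf{R}_{2,0}[c_0\bnu]$ being ``absorbed'' at higher order is unnecessary and slightly off: since \eqref{eq:br1} is assumed to hold exactly and all other terms are purely normal, solvability already forces $\mathbf{R}_{2,0}[\bnu]$ to be purely normal (this is the content of Remark \ref{rem:onu}), but this does not affect the validity of your argument.
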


\begin{proof}
If we take the density function $\Phi$ as in \eqref{eq:phi}, and the equation in \eqref{eq:br1} is solvable, then from \eqref{eq:esap}, one has that
\begin{equation}
  \Acal(\omega)[\Phi]= \left(
    \begin{array}{c}
      \mathcal{O}(k^3)  \\
      \mathcal{O}(k^3) + \mathcal{O}(\delta k) + \mathcal{O}(\mu k)  \\
    \end{array}
  \right).
\end{equation}
Thus the estimate in \eqref{eq:esbr} is proved. Moreover, by noting that the functions
\[
  (\mathbf{R}_{2,0}-2\mathbf{R}_{1,0})[c_0\bnu]  \quad \mbox{and} \quad  \mathbf{R}_{1,2}[c_0\bnu] 
\]
are bounded in $L^2(\partial D)^3$ and 
\[
 \tau=\mathcal{O}(1) \quad \mbox{and} \quad  \lambda=\mathcal{O}(1),
\]
one can show by direct computations that if the equation \eqref{eq:br1} is solvable then the parameter $k$ fulfils 
\[
 k=\sqrt{\mathcal{O}(\delta) + \mathcal{O}(\mu) }.
\]
\end{proof}

\begin{rem}\label{rem:onu}
If the equation \eqref{eq:br1} is solvable, 
\mm{the function $\mathbf{R}_{2,0}[\bnu]$ should also contain only the normal component, 
since the functions $\mathbf{R}_{1,0}[\bnu]$ and $\mathbf{R}_{1,2}[\bnu]$ contain only the normal components}. Indeed, this is also physically justifiable. \mm{We can see from the fourth equation in \eqref{eq:mond} that 
it is natural to require the leading term of the traction of the elastic wave outside the bubble $D$ to contain 
only the normal component in order to strengthen the resonance, since the pressure in the bubble has only the normal component}. This property depends heavily on the geometry of the domain $D$.
\end{rem}

\begin{rem}
Since $\delta\ll1$ and $\mu\ll1$, \mm{we can readily see from \eqref{eq:esbr} and \eqref{eq:lk} in Proposition \ref{pro:br1}
that enhanced resonance effects can be achieved.}
\end{rem}

\begin{rem}
We apply Proposition \ref{pro:br1} to the case when the bubble $D$ is a unit ball. 
\mm{In such a case, one has that for $\bx\in\partial D$,}
\[
 S_{\partial D,0}^{-1}[1](\bx)=-1 \quad \mbox{and} \quad S_{\partial D,0}[\bnu](\bx)=-\frac{1}{3}\bnu.
\] 
Thus we get from \eqref{eq:v0v1} that
\[
 \varphi_0=\varphi_1 =-1.
\]
We first calculate the parameter $c_0$ defined in  \eqref{eq:c0}. A direct calculation shows that 
\[
 \int_{\partial D}  K^{*}_{\partial D,2} [-1](\bx)  ds(\bx)= -4\pi/3.
\]
Hence, one obtains 
\[
c_0=\frac{\int_{\partial D}  K^{*}_{\partial D,2} [\varphi_0](\bx)  ds(\bx) }{\int_{\partial D} \bnu\cdot {\bS}_{\partial D,0}^{p}[\bnu](\bx)ds(\bx) }=\frac{-4\pi/3}{-4\pi/(3(\lambda+2\mu))}=\lambda+2\mu.
\]
From the proof in Theorem \ref{thm:wr1}, we derive 
\[
  \mathbf{R}_{1,0}[\bnu]=-\bnu/2.
\]
Moreover, by some straightforward but rather tedious calculations, one can obtain that 
\[
 \mathbf{R}_{2,0}[\bnu]=\bnu/3 \quad \mbox{and} \quad  \mathbf{R}_{1,2}[\bnu]=-\bnu/3.
\]
Therefore the equation \eqref{eq:br1} in Proposition \ref{pro:br1} can be simplified to be 
\[
 \delta \tau^2 \bnu  +\frac{4}{3} \mu \bnu-  \frac{1}{3} k^2\tau^2\bnu =0,
\]
which shows that $k$ should be chosen as 
\begin{equation}\label{eq:re3}
  k=\sqrt{3\delta +4\mu/\tau^2}.
\end{equation}
Substituting the parameters in \eqref{eq:pare} into the last equation shows that the resonance frequency should be 
\[
 \omega=\sqrt{\frac{3\kappa + 4\tilde{\mu}}{\rho_e}},
\]
which recovers the physical result in \cite{AR}. Definitely, \eqref{eq:br1} may be solvable in more general scenarios, which then yield bubble-elastic structures that can produce enhanced resonances. 
\end{rem}

As also discussed earlier, the geometry of the bubble shall also play a critical role in the resonance in our study. We next show that if the domain $D$ is properly chosen, then resonance effects can also be significantly enhanced.

\begin{prop}\label{pro:br2}
Consider the same setup as that in Theorem \ref{thm:wr1}. Furthermore, if the following equation is solvable,
  \begin{equation}\label{eq:br2}
\begin{split}
   &  \delta \bnu \sum_{j=0}^{m}\tau^2 k^j \sum_{i=0}^j  S_{\partial D,i}[\varphi_{j-i}] +\frac{k^2 \tau^2}{c_p^2} \sum_{j=0}^{m}\left(\frac{k\tau}{c_p}\right)^j \sum_{i=2}^{j+2}  \mathbf{R}_{1,i}[c_{j+2-i}\bnu] + \\ 
& \frac{\mu}{\lambda+2\mu} \sum_{j=0}^{m} \left(\frac{k\tau}{c_p}\right)^j \sum_{i=0}^j   (\mathbf{R}_{2,i}-2\mathbf{R}_{1,i})[c_{j-i}\bnu]=0, 
\end{split}
\end{equation}
where $\varphi_j$ is defined in \eqref{eq:wp1}, then one has that 
\begin{equation}
  \|\Acal(\omega,\delta)[\Phi](\bx)\|_{\mathcal{H}}=\mathcal{O}(\delta k^{m+1}) + \mathcal{O}(\mu k^{m+1}) +\mathcal{O}(k^{m+3}).
\end{equation}
\end{prop}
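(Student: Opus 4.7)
The plan is to generalize the ansatz construction from the proofs of Theorem~\ref{thm:wr1} and Proposition~\ref{pro:br1} by pushing the asymptotic expansion of the density function several orders further in $k$. Concretely, I would take
\[
\Phi \;=\; \sum_{j=0}^{m+2} k^{j}\!\begin{pmatrix}\varphi_{j}\\ c_{j}\bnu\end{pmatrix}
\;+\;\sum_{j=m+3}^{m+4} k^{j}\!\begin{pmatrix}\varphi_{j}\\ \mathbf{0}\end{pmatrix},
\]
with $\varphi_{0},\varphi_{1}$ as in \eqref{eq:v0v1} and $c_{0}$ as in \eqref{eq:c0}. Since each $\bvarphi_{j}$ is a scalar multiple of $\bnu$, the remark following Lemma~\ref{lem:kerss} ensures $\bvarphi_{j}\in\ker(\bS_{\partial D}^{\omega,s})$ and, crucially, is independent of $k_{s}$, so the expansions in Lemmas~\ref{lem:ash}--\ref{lem:ker1} apply. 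Substituting this ansatz into $\Acal(k,\delta)$ and matching equal powers of $k$ then produces a recursive system for $(\varphi_{j},c_{j})$, to be solved order by order as for $\varphi_{2},c_{1},\varphi_{3},c_{2},\varphi_{4}$ in the proof of Theorem~\ref{thm:wr1}.

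At each order $j\geq 2$ the scalar component of $\Acal(k,\delta)[\Phi]$ yields an equation of the form
\[
\left(-\tfrac{I}{2}+K^{*}_{\partial D,0}\right)[\varphi_{j}] \;=\; R_{j},
\]
where $R_{j}$ depends only on previously determined quantities together with the still-free coefficient $c_{j-2}$. By Lemma~\ref{lem:ks1}, this equation is solvable in $L^{2}_{0}(\partial D)$ precisely when $\int_{\partial D}R_{j}\,ds=0$, and this solvability condition pins down $c_{j-2}$ uniquely, generalizing the formulas for $c_{0},c_{1},c_{2}$ given just before \eqref{eq:esap}; then $\varphi_{j}$ is recovered by inverting $-\tfrac{I}{2}+K^{*}_{\partial D,0}$ on $L^{2}_{0}(\partial D)$. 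For the vector component, the cancellation $\bigl(\tfrac{I}{2}+\mathbf{R}_{1,0}\bigr)[c_{j}\bnu]=0$ (cf.\ the argument leading to $\bvarphi_{0}=c_{0}\bnu$ in the proof of Theorem~\ref{thm:wr1}) kills the principal term at every order, while the residuals split into three families: a $\delta\tau^{2}\bnu S_{\partial D}^{k}$-contribution from the bottom-left block of $\Acal$, a $\mu/(\lambda+2\mu)$-contribution and a $k^{2}\tau^{2}/c_{p}^{2}$-contribution from expanding $\bigl(\tfrac{I}{2}+\bK^{k\tau,*}_{\partial D}\bigr)[c_{j}\bnu]$ via \eqref{eq:apk} and Lemma~\ref{lem:ker1}. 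Collecting all such contributions up to order $k^{m}$ assembles precisely the left-hand side of \eqref{eq:br2}; if this aggregate vanishes by hypothesis, the remaining terms in $\Acal(k,\delta)[\Phi]$ are of orders $\mathcal{O}(k^{m+3})$ in the scalar slot and $\mathcal{O}(\delta k^{m+1})+\mathcal{O}(\mu k^{m+1})+\mathcal{O}(k^{m+3})$ in the vector slot, as claimed.

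The main obstacle is the combinatorial bookkeeping: the three asymptotic series in $k$, $k\tau$, and $k\tau/c_{p}$ interact multiplicatively when acting on the finite linear combination of $\bnu$ and $S_{\partial D,0}^{-1}[1]$ used in the ansatz, and one must re-group the resulting terms into the triple-indexed form displayed in \eqref{eq:br2}. The uniform-in-$j$ boundedness of $S_{\partial D,j}$, $K^{*}_{\partial D,j}$, $\bS^{p}_{\partial D,j}$ and $\bK^{p,*}_{\partial D,j}$ stated in Lemmas~\ref{lem:ash} and~\ref{lem:asl} guarantees that all the constructed $\varphi_{j}$ and $c_{j}$ remain uniformly bounded; this in turn justifies truncation of the series and legitimizes absorbing all tail contributions into the stated $\mathcal{O}$-terms, completing the estimate.
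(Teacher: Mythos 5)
Your proposal is correct and follows essentially the same route as the paper: an order-by-order construction of $\Phi=\sum_j k^j(\varphi_j, c_j\bnu)^t$ in which the solvability condition for $\left(-\tfrac{I}{2}+K^{*}_{\partial D,0}\right)$ on $L^2_0(\partial D)$ (Lemma \ref{lem:ks1}) fixes each $c_j$, the inverse then yields $\varphi_{j+2}$, and the vector component of $\Acal(k,\delta)[\Phi]$ reorganizes into the three families of terms in \eqref{eq:br2}. The only cosmetic difference is that you truncate the ansatz at order $k^{m+4}$ (scalar) and $k^{m+2}$ (vector) while the paper writes the full series and then invokes the hypothesis up to order $m$; both give the stated remainder $\mathcal{O}(\delta k^{m+1})+\mathcal{O}(\mu k^{m+1})+\mathcal{O}(k^{m+3})$.
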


\begin{proof}
Following the proof of Theorem \ref{thm:wr1}, one can construct 
\begin{equation}\label{eq:wp1}
 \Phi=\sum_{j=0}^\infty k^j
  \left(
    \begin{array}{c}
      \varphi_j \\
      \bvarphi_j  \\
    \end{array}
  \right) 
\end{equation}
where $\varphi_0, \varphi_1$ are the same as those in \eqref{eq:v0v1} and 
\[
  \varphi_j=\left(-\frac{I}{2} + K^{*}_{\partial D}\right)^{-1} \left[ \sum_{m=0}^{j-2} (\tau/c_p)^m \bnu\cdot {\bS}_{\partial D,m}^{p}[\bvarphi_{j-2-m}] -\sum_{m=2}^j K^{*}_{\partial D,m} [\varphi_{j-m}] \right] \; \mbox{for} \; j\geq 2,
\]
\[
 \bvarphi_j =c_j \bnu, \quad \mbox{for} \quad j\geq 0, 
\]
with 
\[
  c_j =\frac{\int_{\partial D} \sum_{m=2}^{j+2} K^{*}_{\partial D,m} [\varphi_{j+2-m}](\bx) - \sum_{m=1}^{j} (\tau/c_p)^m \bnu\cdot {\bS}_{\partial D,m}^{p}[\bvarphi_{j-m}](\bx) ds(\bx)}{\int_{\partial D} \bnu\cdot {\bS}_{\partial D,0}^{p}[\bnu] ds(\bx)}.
\]
It is remarked here that when calculating $\Phi$ in \eqref{eq:wp1}, one should first calculate $c_j$ to obtain $\bvarphi_j$ and then calculate $\varphi_{j+2}$ for $j=0, 1, 2,\ldots$, since $\varphi_0, \varphi_1 \in \spn\{S_{\partial D,0}^{-1}[1]\}$.
Hence one has that 
\[
  \Acal(\omega,\delta)[\Phi]_1=0,
\]
\[
\begin{split}
  \Acal(\omega,\delta)[\Phi]_2= &  \delta \bnu \sum_{j=0}^{\infty}\tau^2 k^j \sum_{i=0}^j  S_{\partial D,i}[\varphi_{j-i}] + \frac{k^2 \tau^2}{c_p^2} \sum_{j=0}^{\infty}\left(\frac{k\tau}{c_p}\right)^j \sum_{i=2}^{j+2}  \mathbf{R}_{1,i}[c_{j+2-i}\bnu] + \\ 
& \frac{\mu}{\lambda+2\mu} \sum_{j=0}^{\infty} \left(\frac{k\tau}{c_p}\right)^j \sum_{i=0}^j   (\mathbf{R}_{2,i}-2\mathbf{R}_{1,i})[c_{j-i}\bnu],
\end{split}
\]
where $\Acal(\omega,\delta)[\Phi]_i$ denotes the $i$-th component of the vectorial function  $\Acal(\omega,\delta)[\Phi]$ and the operators $\mathbf{R}_{i,j}$ with $i=1,2$ and $j\geq 0$, are defined in \eqref{eq:apk}. Thus, if the equation \eqref{eq:br2} is solvable, one can conclude that 
\begin{equation*}
  \|\Acal(\omega,\delta)[\Phi](\bx)\|_{\mathcal{H}}=\mathcal{O}(\delta k^{m+1}) + \mathcal{O}(\mu k^{m+1}) +\mathcal{O}(k^{m+3}).
\end{equation*}
%
\end{proof}

\begin{rem}
It is noted that if the equation \eqref{eq:br2} is solvable, then one should have 
\[
 k=\sqrt{\mathcal{O}(\delta) + \mathcal{O}(\mu) }
\]
and
\begin{equation}\label{eq:rbnu}
 \mathbf{R}_{2,i}[\bnu] =\psi_i \bnu \quad \mbox{for} \quad 0\leq i\leq m,
\end{equation}
with $\psi_i \in L^2(\partial D)$. \mm{The identities \eqref{eq:rbnu} are unobjectionably} reasonable as explained in Remark \ref{rem:onu}.
\end{rem}

\begin{rem}
Proposition \ref{pro:br1} is a special case of Proposition \ref{pro:br2} with m=0. Indeed, even though the equation \eqref{eq:br2} could be solved for $m>0$, it is enough to solve the equation \eqref{eq:br2} with $m=0$, namely the equation \eqref{eq:br1} in Proposition \ref{pro:br1}, to obtain the resonant frequency. This is because that it gives the leading-order term of the resonant frequency.
\end{rem}
\begin{rem}
If the equation \eqref{eq:br2} is solvable for $m=\infty$, then the function $\Phi$ defined in \eqref{eq:wp1} belongs to the kernel of the operator $\Acal(\omega,\delta)$, namely the condition \eqref{eq:conre} is fulfilled. In this case, condition \eqref{eq:rbnu} signifies that $\bnu$ should be an eigenfunction of the operator $\bK_{\partial D}^{\omega,*}$. In fact, this is the case when the domain $D$ is a ball. In \cite{DLL1}, \mm{it was proved} that $\bnu$ is an eigenfunction of the operator $\bK_{\partial D}^{\omega,*}$, namely
\begin{equation}\label{eq:eiK}
  \bK_{\partial D}^{\omega,*}[\bnu] = \chi_1 \bnu, \quad \bx\in \partial D,
\end{equation}
where 
\[
 \chi_1= \frac{4\rmi\mu R k_p }{(\lambda+2\mu)} j_{1}(k_p R) h_{1}(k_p R) - \rmi R^2 k_p^2  j_{1}(k_p R) h_{0}(k_p R)-\frac{1}{2},
\]
with $R$ being the radius of the ball $D$, and $j_n(|\bx|)$ and $h_n(|\bx|)$ respectively denoting the spherical Bessel function and spherical Hankel function of the first kind and of order $n$. Moreover, $\bnu$ is also an eigenfunction of the operator $ \bS_{\partial D}^{\omega}$, namely
\begin{equation}
\begin{split}
 \bS_{\partial D}^{\omega}[ \bnu](\bx) =\frac{-\rmi R^2 k_p}{(\lambda+2\mu)}   h_{1}(k_p R) j_{1}(k_p R) \bnu, \quad \bx\in \partial D.
\end{split}
\end{equation}
\mm{It was also proved} in \cite{s25} that
\begin{equation}
   S_{\partial D}^{k}[1](x)=-\rmi k R^2 h_0(k R) j_0(k R), \quad \bx\in \partial D,
\end{equation}
and 
\begin{equation}\label{eq:eik}
  K_{\partial D}^{k,*}[1](x)= \frac{1}{2}-\rmi k^2 R^2 j_0^{\prime}(kR) h_0(kR), \quad \bx\in \partial D.
\end{equation}
Following the asymptotic expansions for the functions $j_n(|\bx|)$ and $h_n(|\bx|)$, $n=0,1$, with $|\bx|\ll1$ (cf. \cite{CK:bk1}), one can obtain the expressions of $S_{\partial D,i}[1]$, $\mathbf{R}_{1,i}[\bnu]$ and $\mathbf{R}_{2,i}[\bnu]$ for $i\geq0$, respectively. Next we only present the first few terms,
\[
 \delta \tau^2   +\frac{4}{3} \mu-\rmi k \left( \frac{3\tau^3 \delta + 4 \tau \mu}{3\sqrt{\lambda+2\mu}} \right)  - k^2\left( \frac{1}{3} \tau^2 + \frac{1}{6} \tau^2\delta \right)  + k^3\frac{\rmi \tau^3\delta}{6\sqrt{\lambda+2\mu}} +\cdots.
\]
One can readily see the equation \eqref{eq:br2} is reduced to solving a polynomial equation with respect to $k$ of an infinity order. By a truncation and approximation, we solve the following equation,
\begin{equation}\label{eq:3dsp}
  \delta \tau^2   +\frac{4}{3} \mu-\rmi k \left( \frac{3\tau^3 \delta + 4 \tau \mu}{3\sqrt{\lambda+2\mu}} \right)  - k^2\left( \frac{1}{3} \tau^2 + \frac{1}{6} \tau^2\delta \right)=0.
\end{equation}
whose roots are given by 
\begin{equation}\label{eq:root3}
k_{d3\pm}=\frac{\pm\sqrt{(3 \tau^2 \delta +4\mu)(4(\lambda+\mu)-3\tau^2\delta)}-(3\tau^2\delta + 4\mu)\rmi }{2\tau\sqrt{\lambda+2\mu}}.
\end{equation}
One can verify directly that the root \eqref{eq:re3} is the positive part of the roots \eqref{eq:root3} neglecting the infinitesimal part. 
In fact, the critical values obtained in \eqref{eq:root3} exhibit excellent accuracy for the resonant frequencies; see Remarks \ref{rem:ex3} and \ref{rem:num} \mm{in what follows.}
\end{rem}

\section{Minnaert resonances in two dimensions}

In this section, we derive the Minnaert resonances for the system \eqref{eq:mond} in two dimensions when the domain $D$ is a unit disk. \rr{The extension of the low-frequency analysis from three dimensions to two dimensions is technically not straightforward. A major difficulty comes from the fact that the asymptotic expansions of the fundamental solutions $G^k(\bx)$ in 2D and 3D defined in \eqref{eq:fu_he} are of a different nature. In fact, the expansion of the fundamental solution $G^k(\bx)$ in 3D is the summation of $\varphi_j(\bx)k^j$ with $j=0,1,\cdots$. However in 2D the asymptotic expansion is the summation of $\phi(\bx)(c_j+\ln(k))k^j$, with $j=0,1,\cdots$(cf.\cite{AF1}), which significantly increases the complexity of solving the counterpart equation \eqref{eq:conre} in the 2D case. Hence, for this technical reason, we shall only derive the Minnaert resonances for the system \eqref{eq:mond} in two dimensions for a disk domain $D$. Indeed, as can be seen from Theorem \ref{thm:wr2d}, even the domain $D$ is a disk in 2D, one can not derive the explicit expression of the resonant frequency.}


In what follows, we let $J_{n}(|\bx|)$ and $H_{n}(|\bx|)$ respectively denote the Bessel function of order $n$ and the Hankel function of the first kind of order $n$. When the argument $k\ll1$, the functions $J_n$ and $H_{n}$, $n=0,1$, enjoy the following asymptotic expansions (cf.\cite{CK}):
\begin{equation}\label{eq:asJ}
 J_0(k) = 1- \frac{k^2}{4}+\frac{k^4}{64} +\mathcal{O}(k^6), \qquad  J_1(k) = \frac{k}{2}-\frac{k^3}{16} + \mathcal{O}(k^5),
\end{equation}
\begin{equation}\label{eq:asH0}
 H_0(k) =  \frac{\rmi ( \gamma + 2\ln (k))}{\pi}+\frac{\rmi ( -2+\gamma + 2\ln (k))k^2}{4\pi} + \mathcal{O}((1+\ln(k))k^3),
\end{equation}
and 
\begin{equation}\label{eq:asH1}
 H_1(k) =  -\frac{2\rmi}{k\pi} + \frac{\rmi (-1+ \gamma + 2\ln (k))k}{2\pi} + \mathcal{O}((1+\ln(k))k^3),
\end{equation}
\mm{with
$
 \gamma=2E_c-\rmi \pi -2 \ln 2,
$
and $E_c$ being the Euler's constant.
}

 By the definition of the strong resonance in \eqref{eq:conre} for the system \eqref{eq:mond} , we next construct a nontrivial solution $\Phi$ such that
\begin{equation}\label{eq:re2d}
 \Acal(k,\delta)[\Phi](\bx)=0,
\end{equation}
where $\Acal(k,\delta)$ is defined in \eqref{eq:defA}. If the domain $D$ is a unit disk, direction calculations show that for $\bx\in \partial D$,
\begin{equation}\label{eq:eil2d}
 {\bS}_{\partial D}^{k\tau}[\bnu](\bx) =\zeta_1 \bnu \quad \mbox{and}\quad {\bK}^{k\tau,*}_{\partial D}[\bnu](\bx) =  \zeta_2 \bnu, 
\end{equation}
where
\[
 \zeta_1=\frac{-\rmi\pi}{2(\lambda+ 2\mu)} J_1(k_p) H_1(k_p),
\]
and 
\[
 \zeta_2= \frac{-\rmi \pi J_1(k_p)}{2(\lambda+ 2\mu)}\left( (\lambda+ 2\mu)k_p H_1^{\prime}(k_p) + \lambda H_1(k_p)  \right)-\frac{1}{2},
\]
with  $k_p=k\tau/\sqrt{\lambda+2\mu}$.
Moreover, one has that for $\bx\in \partial D$ (cf.\cite{FDC}), 
\begin{equation}\label{eq:eih2d}
  S^k_{\partial D}[1](\bx) = \zeta_3  \quad \mbox{and}\quad K^{k,*}_{\partial D}[1](\bx) =\zeta_4,
\end{equation}
where
\[
   \zeta_3 = \frac{-\rmi\pi}{2} J_0(k) H_0(k) \quad \mbox{and}\quad \zeta_4= \frac{1}{2}- \frac{\rmi\pi}{2} k J_0^{\prime}(k) H_0(k).
\]
Hence the nontrivial solution to the equation \eqref{eq:re2d} should have the following form 
\[
 \Phi= \left(
    \begin{array}{c}
      b_1 \\
      b_2\bnu  \\
    \end{array}
  \right) .
\]
Substituting the last equation into \eqref{eq:re2d} yields that
\begin{equation}\label{eq:red2d}
\mathbf{B}\mathbf{b}=0,
\end{equation}
where
\[
  \mathbf{B}=\left(
    \begin{array}{cc}
        \frac{1}{k^2}\left(-\frac{1}{2} + \zeta_4\right) &  -\zeta_1\medskip \\
      \delta \tau^2\zeta_3 & \frac{1}{2} + \zeta_2\\
    \end{array}
  \right) 
\quad \mbox{and}\quad
\mathbf{b}=
\left(
    \begin{array}{c}
      b_1 \\
      b_2  \\
    \end{array}
  \right) 
\]
with $\zeta_i$, $i=1,2,3,4$ defined in \eqref{eq:eil2d} and \eqref{eq:eih2d}. To ensure that the equation \eqref{eq:red2d} possesses nontrivial solutions, the determinant $\det(\mathbf{B})$ of the matrix $\mathbf{B}$ should vanish. 
Through some straightforward but rather tedious calculations and with the help of the asymptotic expressions in \eqref{eq:asJ}, \eqref{eq:asH0} and \eqref{eq:asH1}, we can obtain
\[
\begin{split}
 \det(\mathbf{B})  &=  \frac{1}{k^2}\left(-\frac{1}{2} + \zeta_4\right)\left( \frac{1}{2} + \zeta_2\right) +\delta \tau^2  \zeta_1 \zeta_3\\
&= -(\gamma + 2 \ln(k)) \left(\frac{(\mu+\delta\tau^2)}{4(\lambda + 2\mu)} + \frac{k^2 \tau^2\lambda(\gamma+ 2\ln(k\tau/\sqrt{\lambda+2\mu}))}{16(\lambda + 2\mu)^2} \right)+ \\
&\quad o(\mu(\gamma + \ln(k))) + o(\delta(\gamma + \ln(k))) +  o(k^2(\gamma + \ln(k))),
\end{split}
\]
where $\gamma$ is defined in \eqref{eq:asH0}. Hence, \mm{we readily come to the following conclusion.}
\begin{thm}\label{thm:wr2d}
Consider the system \eqref{eq:mond} in two dimensions with $D$ being a central disk. If the parameters are chosen according to \eqref{eq:paas}, then the strong resonance occurs. 
Moreover, the leading-order terms of the resonant frequencies are given by the roots of the following equation
\begin{equation}\label{eq:ef2d}
 (\gamma + 2 \ln(k)) \left(\frac{(\mu+\delta\tau^2)}{4(\lambda + 2\mu)} + \frac{k^2 \tau^2\lambda(\gamma+ 2\ln(k\tau/\sqrt{\lambda+2\mu}))}{16(\lambda + 2\mu)^2} \right)=0,
\end{equation}
where $\tau$ and $\gamma$ are given in \eqref{eq:dtau} and \eqref{eq:asH0}, respectively.
\end{thm}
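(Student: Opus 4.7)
The overall strategy is to exploit the radial symmetry of the unit disk $D$, which makes the constant function $1$ and the outward normal $\bnu$ joint eigenfunctions of all four layer-potential blocks appearing in $\Acal(k,\delta)$. This reduces the operator equation $\Acal(k,\delta)[\Phi]=0$ to a $2\times 2$ linear algebraic system, whose nontrivial solvability is a scalar determinantal condition. The leading-order expansion of this condition in $k$, $\mu$, and $\delta$ then yields \eqref{eq:ef2d}.

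First, I would verify the eigenvalue identities \eqref{eq:eil2d} and \eqref{eq:eih2d} by direct computation. For $S_{\partial D}^{k}[1]$ and $K_{\partial D}^{k,*}[1]$, one parametrizes $\partial D$ by $\bx=(\cos\theta,\sin\theta)$, expands the kernel $H_{0}(k|\bx-\by|)$ via Graf's addition formula, and observes that only the zeroth Fourier mode survives the integration against the constant $1$; the surviving terms assemble into the scalars $\zeta_{3}$ and $\zeta_{4}$ built from $J_{0}$ and $H_{0}$. For $\bS_{\partial D}^{k\tau}[\bnu]$ and $\bK_{\partial D}^{k\tau,*}[\bnu]$, one applies the same Fourier analysis to the decomposition \eqref{eq:ef} of $\bGa^{\omega}$; since $\bnu(\theta)=(\cos\theta,\sin\theta)$ lives purely in the $n=\pm 1$ mode, only terms involving $J_{1}$ and $H_{1}$ survive, producing $\zeta_{1}$ and $\zeta_{2}$. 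In particular, the span of $(1,0)^{T}$ and $(0,\bnu)^{T}$ is invariant under $\Acal(k,\delta)$.

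Second, I would substitute the ansatz $\Phi=(b_{1},b_{2}\bnu)^{T}$ into $\Acal(k,\delta)[\Phi]=0$; the eigenrelations above collapse the operator identity to $\mathbf{B}\mathbf{b}=0$ with $\mathbf{B}$ as in the pre-theorem display. A nontrivial $\mathbf{b}$ exists if and only if $\det(\mathbf{B})=0$, i.e.
\[
\frac{1}{k^{2}}\Bigl(-\tfrac{1}{2}+\zeta_{4}\Bigr)\Bigl(\tfrac{1}{2}+\zeta_{2}\Bigr)+\delta\tau^{2}\,\zeta_{1}\zeta_{3}=0.
\]
I then expand each $\zeta_{i}$ using \eqref{eq:asJ}--\eqref{eq:asH1}; since $k\ll 1$ and $k_{p}=k\tau/\sqrt{\lambda+2\mu}\ll 1$, the Hankel contributions introduce logarithmic factors $(\gamma+2\ln k)$ and $(\gamma+2\ln k_{p})$, while $J_{0},J_{1}$ yield polynomial corrections. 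Collecting the dominant balance and invoking $\mu\ll 1$, $\delta\ll 1$ from \eqref{eq:paas}, one factors out $(\gamma+2\ln k)$ and recovers the equation \eqref{eq:ef2d} modulo strictly smaller remainder.

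The main obstacle, I expect, lies in the careful bookkeeping of the mixed polynomial--logarithmic asymptotics. Unlike the three-dimensional case where $G^{k}$ expands in pure powers of $k$, in two dimensions the series takes the form $\sum(c_{j}+\ln k)k^{j}$, so every coefficient carries a factor growing like $\ln k$. Identifying the genuinely dominant balance among $(\gamma+2\ln k)(\mu+\delta\tau^{2})$, $k^{2}\tau^{2}\lambda(\gamma+2\ln k)(\gamma+2\ln k_{p})$, and the non-logarithmic subleading pieces requires simultaneously tracking the polynomial order in $k$ and the logarithmic order in $(\gamma+\ln k)$, and verifying that cross terms such as $\mu k^{2}(\gamma+\ln k)$ or $\delta k^{2}(\gamma+\ln k)$ are genuinely absorbed into the asserted remainder $o(\mu(\gamma+\ln k))+o(\delta(\gamma+\ln k))+o(k^{2}(\gamma+\ln k))$ rather than contributing to the leading order.
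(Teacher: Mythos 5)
Your proposal is correct and follows essentially the same route as the paper: the radial symmetry of the disk makes $1$ and $\bnu$ eigenfunctions of the four layer-potential blocks (the identities \eqref{eq:eil2d}--\eqref{eq:eih2d}), the ansatz $\Phi=(b_1,b_2\bnu)^T$ reduces $\Acal(k,\delta)[\Phi]=0$ to the $2\times 2$ system $\mathbf{B}\mathbf{b}=0$, and the vanishing of $\det(\mathbf{B})$ expanded via \eqref{eq:asJ}--\eqref{eq:asH1} gives \eqref{eq:ef2d} up to the stated logarithmic remainders. The only difference is cosmetic: you verify the eigenrelations by Graf/Fourier analysis, whereas the paper quotes them (citing \cite{FDC} for the Helmholtz part) as direct calculations.
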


\begin{rem}\label{rem:ex3}
The method used above in deriving the resonances in two dimensions can be applied to the three dimensions as well when the domain $D$ is a central ball. From \eqref{eq:eiK} to \eqref{eq:eik}, one can calculate in a similar manner the determinant of the matrix $\mathbf{B}$ in three dimensions and determine the critical values $k$'s such that the following 
\[
 \det(\mathbf{B}) =0,
\]
 holds to ensure the occurrence of the strong resonance.
\end{rem}
\begin{rem}\label{rem:num}
There exist critical values $k$'s such that $\det(\mathbf{B}) $ vanishes in both two and three dimensions, that is, 
the strong resonance occurs. Since the expression of $\det(\mathbf{B}) $ is nonlinear with respect to $k$, we can resort to computational algorithms to determine these critical values, namely resonant frequencies. Next, for illustrations, we conduct some numerical experiments to find out these critical values. We denote by $k_{b2}$ and $k_{b3}$ for the critical values by directly solving the equation $ \det(\mathbf{B}) =0$ in two and three dimensions, respectively. As comparisons, we also calculate $k_{d3+}$ defined in \eqref{eq:root3} and solve the equation \eqref{eq:ef2d}. The root of the equation \eqref{eq:ef2d} is denoted by $k_{d2}$. The parameters in our numerical experiments are chosen as follows:
\[
 \lambda=1,\quad \tau=1, \quad \mu=\delta=10^{-i} , \  i=2,3,4.
\]
Moreover, the bubble $D$ is a unit disk in $\mathbb{R}^2$ and a unit ball in $\mathbb{R}^3$. 
It is remarked that the case $i=3$ almost indicates the experiment in \cite{CTL}. The corresponding values, $k_{b2}, k_{d2}, k_{b3}$ and $k_{d3+}$ with positive real parts, are presented in Table \ref{tab6}. From Table \ref{tab6}, one can conclude that there indeed exist critical values $k$ such that $ \det(\mathbf{B}) =0$ in both two and three dimensions. Moreover, the roots of the equations \eqref{eq:3dsp} and \eqref{eq:ef2d} exhibit an excellent accuracy agreement with the resonant frequencies. Finally, we would like to point out the negative imaginary parts in the values computed in Table 1 are a physically reasonable requirement (cf. \cite{AF1}). 
\begin{table}[t]
    \centering
    \begin{tabular}{cccc}
      \toprule
      & $i=2$ & $i=3$ & $i=4$ \\
      \midrule
      $k_{b2}$ & $0.110087 - 0.040732\rmi$  & $0.030796 - 0.007347\rmi$ &  $0.008681 - 0.001513\rmi$ \\[5pt]
      $k_{d2}$ & $0.109963 - 0.040294\rmi$  & $0.030790 - 0.007341\rmi$ &  $0.008681 - 0.001513\rmi$ \\[5pt]
      $k_{b3}$ & $0.262065 - 0.034521\rmi$  & $0.083584 - 0.003495\rmi$ & $0.026454 - 0.000349\rmi$  \\[5pt]
      $k_{d3+}$ & $0.262296 - 0.034655\rmi$  & $0.083592 - 0.003496\rmi$  & $0.026455 - 0.000349\rmi$  \\
      \bottomrule
    \end{tabular}
  \caption{ The critical values of  $k_{b2}, k_{d2}, k_{b3}$ and $k_{d3+}$ with positive real parts.}
  \label{tab6}
\end{table}
\end{rem}

\section{Concluding remarks}

\mm{
We have studied the Minnaert resonances for bubble-elastic structures. 
By delicately and subtly balancing the acoustic and elastic parameters as well as the geometry of the bubble, 
we have shown that the Minnaert resonance can (at least approximately) occur for rather general constructions. 
Our study opens up a new direction for the mathematical investigation on bubbly elastic mediums with many potential developments. In the present paper, we have considered only the case that a single bubble is embedded in a soft elastic material. It would be interesting to consider the case with multiple bubbles as well as the corresponding application to the effective realisation of elastic metamaterials. Moreover, we have investigated only the case that the resonance is mainly caused by the p-wave, but it would be interesting to investigate more general bubbly elastic structures with more general resonances. We shall consider these and other related topics in our forthcoming work. 
}

\section*{Acknowledgement}

 The work of H Liu was supported by the FRG fund from Hong Kong Baptist University and the Hong Kong RGC General Research Fund (projects 12302919, 12301218 and 12302017).  The work of J Zou was supported by the Hong Kong RGC General Research Fund (project 14304517) and 
NSFC/Hong Kong RGC Joint Research Scheme 2016/17 (project N\_CUHK437/16).

\end{document}